\newtheorem{theorem}{Theorem}[section]
\newtheorem{proposition}[theorem]{Proposition}
\newtheorem{corollary}[theorem]{Corollary}
\theoremstyle{definition}
\newtheorem{example}[theorem]{Example}
\theoremstyle{remark}
\newtheorem{remark}[theorem]{Remark}
\numberwithin{equation}{section}
\newcommand{\D}{\mathbb{D}}
\begin{document}



\title[Dynamics]{{Dynamics of weighted backward shifts on certain analytic function spaces}}

\author[Das]{Bibhash Kumar Das}
\address{Indian Institute of Technology Bhubaneswar, Jatni Rd, Khordha - 752050, India}
\email{bkd11@iitbbs.ac.in}

\author[Mundayadan]{Aneesh Mundayadan}
\address{Indian Institute of Technology Bhubaneswar, Jatni Rd, Khordha - 752050, India}
\email{aneesh@iitbbs.ac.in}

\subjclass[2010]{Primary 47A16, 46E22, 32K05, 47B32; Secondary
47B37, 37A99.}

\keywords{hypercyclic operator, chaos, mixing operator, weighted shift operator, reproducing kernel Hilbert
space}


\begin{abstract}
We introduce the Banach spaces $\ell^p_{a,b}$ and $c_{0,a,b}$, of analytic functions on the unit disc, having normalized Schauder bases consisting of polynomials of the  form $f_n(z)=(a_n+b_nz)z^n, ~~n\geq0$, where $\{f_n\}$ is assumed to be equivalent to the standard basis in $\ell^p$ and $c_0$, respectively. We study the weighted backward shift operator $B_w$ on these spaces, and obtain necessary and sufficient conditions for $B_w$ to be bounded, and prove that, under some mild assumptions on $\{a_n\}$ and $\{b_n\}$, the operator $B_w$ is similar to a compact perturbation of a weighted backward shift on the sequence spaces $\ell^p$ or $c_0$. Further, we study the hypercyclicity, mixing, and chaos of $B_w$, and establish the existence of hypercyclic subspaces for $B_w$ by computing its essential spectrum. Similar results are obtained for a function of $B_w$ on $\ell^p_{a,b}$ and $c_{0,a,b}$.
\end{abstract}
\maketitle

\noindent
\tableofcontents
\section{Introduction}

For a suitable sequence $\nu:=(\nu_n)_{n=0}^{\infty}$ of complex numbers, the weighted spaces $\ell^p_{\nu}$ and $c_{0,\nu}$ are defined to be the spaces of all analytic functions of the form $f(z)=\sum_{n=0}^{\infty} \lambda_nz^n$ such that $\sum_n \nu_n|\lambda_n|^p<\infty$ and $\lim_n |\nu_n\lambda_n|=0$, respectively. The weighted backward shift operator $B_w$ on such a space is given by
\begin{center}
 $B_w\big(\sum_{n=0}^{\infty}\lambda_nz^n\big)=\sum_{n=0}^{\infty}w_{n+1}\lambda_{n+1}z^n.$
\end{center}
The spaces $\ell^p_{\nu}$ and $c_{0,\nu}$, and the operator $B_w$ play central roles in operator theory, particularly in linear dynamics. Motivated by this, we introduce $\ell^p_{a,b}$ and $c_{0,a,b}$, which are Banach spaces of analytic functions on the unit disc in the complex plane, having normalized Schauder bases of the form 
\[
\big\{(a_n+b_nz)z^{n}:n\geq 0\big\},
\]
equivalent to the standard bases in $\ell^p(\mathbb{N})$ and $c_0(\mathbb{N})$, respectively. We primarily study the dynamical properties (hypercyclicity, mixing, periodic vectors, and chaos) of $B_w$ on these spaces. The spaces $\ell^p_{a,b}$ and $c_{0,a,b}$ are of independent and general interest as well because they are the ``next best" Banach sequence spaces, compared to the weighted $\ell^p$ and $c_0$ spaces.  For a general theory of classical weighted shifts, we refer to Shields \cite{Shields}. In linear dynamics, shifts received a major attention through Godefroy and Shapiro \cite{Godefroy-Shapiro}, Kitai \cite{Kitai} and Salas \cite{Salas}. For an account on the fundamentals of linear dynamics, see the monographs by Bayart and Matheron \cite{Bayart-Matheron} and Grosse-Erdmann and Peris \cite{Erdmann-Peris}.

 An operator $T$ on a separable Banach space $X$ is said to be \textit{hypercyclic} if there exists $x\in X$, known as a \textit{hypercyclic vector} for $T$, such that the orbit$\{x,Tx,T^2x,\cdots\}$ is dense in $X$. If a hypercyclic operator $T$ on $X$ has a dense set of periodic vectors, then $T$ is called \textit{chaotic}. Recall that a vector $y\in X$ is periodic for $T$ if its orbit under $T$ is periodic, that is, $T^py=y$ for some $p$. An operator $T$ on $X$ is said to be \textit{topologically transitive} if, for two non-empty open sets $U_1$ and $U_2$ of $X$, there exists a natural number $k$ such that $T^k(U_1)\cap U_2\neq \phi$. The transitivity notion is equivalent to that of hypercyclicity, assuming the separability of the underlying Banach space $X$. A strong form of transitivity is the topological mixing: an operator $T$ is \textit{topologically mixing} on $X$ if, for any two non-empty open sets $U_1$ and $U_2$ of $X$, there exists $N$, a natural number, such that $T^n(U_1)\cap U_2\neq \phi$ for all $n\geq N$. Mixing and chaos are stronger than the hypercyclicity; however, they are not comparable in general. Several familiar operators including weighted shifts on sequence spaces, and composition operators and differential operators on analytic function spaces exhibit the hypercyclic, mixing and chaotic properties. The study is intimately related to classical areas such as complex function theory, dynamical systems, and operator theory, cf. \cite{Bayart-Matheron} and \cite{Erdmann-Peris}.

We will make use of the following standard criteria in linear dynamics for establishing the hypercyclic and chaotic properties of the backward shift. Different versions of these criteria are available in the literature, cf. \cite{Bayart-Matheron} and \cite{Erdmann-Peris}.

\begin{theorem}\label{thm-hypc} (\textsf{Gethner-Shapiro Criterion} \cite{Gethner-Shapiro})
Let $T$ be a bounded operator on a separable Banach space $X$, and let $X_0$ be a dense subset of $X$. If $\{n_k\} \subseteq \mathbb{N}$ is a strictly
increasing sequence and $S:X_0\mapsto X_0$ is a map such that, for each $x\in X_0$,
\[
\lim_{k\rightarrow \infty} T^{n_k}x=0=\lim_{k\rightarrow \infty}S^{n_k}x,
\]
and
\[
TSx=x,
\]
then $T$ is hypercyclic. Moreover, if $n_k=k$ for all $k\geq 1$, then $T$ is mixing on $X$.
\end{theorem}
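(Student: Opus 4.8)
The plan is to verify the Birkhoff transitivity criterion along the sequence $\{n_k\}$ and then invoke the standard fact that a topologically transitive operator on a separable Banach space is hypercyclic: for a countable base $\{V_m\}$ of the topology, the set of hypercyclic vectors equals $\bigcap_m \bigcup_k T^{-n_k}(V_m)$, and each $\bigcup_k T^{-n_k}(V_m)$ is open and, by transitivity, dense, so the Baire category theorem makes the intersection a dense $G_\delta$, hence nonempty. Separability of $X$ is used precisely to obtain the countable base.

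To check transitivity, I would fix two nonempty open sets $U,V\subseteq X$ and, using density of $X_0$, pick $u\in U\cap X_0$ and $v\in V\cap X_0$. Since $S$ maps $X_0$ into itself and $TSx=x$ on $X_0$, peeling off one factor $TS$ at a time yields $T^{n_k}S^{n_k}v=v$ for every $k$. Now set $w_k:=u+S^{n_k}v\in X_0$. The hypothesis $S^{n_k}v\to 0$ gives $w_k\to u$, so $w_k\in U$ for all large $k$; meanwhile $T^{n_k}w_k=T^{n_k}u+T^{n_k}S^{n_k}v=T^{n_k}u+v\to v$ by the hypothesis $T^{n_k}u\to 0$, so $T^{n_k}w_k\in V$ for all large $k$. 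Hence $T^{n_k}(U)\cap V\neq\emptyset$ for all sufficiently large $k$; in particular it is nonempty for some $k$, which is transitivity along $\{n_k\}$, and therefore $T$ is hypercyclic.

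For the mixing assertion, take $n_k=k$. The very same construction produces, for each pair of nonempty open sets $U,V$, an integer $N$ (depending on $U,V$) such that $w_n\in U$ and $T^nw_n\in V$ for every $n\geq N$, i.e. $T^n(U)\cap V\neq\emptyset$ for all $n\geq N$, which is exactly topological mixing.

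I do not expect a serious obstacle here: the entire content is the algebraic upgrade of the right-inverse relation $TS=I$ on $X_0$ to $T^{n_k}S^{n_k}=I$ on $X_0$, together with the witness $w_k=u+S^{n_k}v$ that is simultaneously close to a prescribed point of $U$ and mapped close to a prescribed point of $V$. The only points deserving care are that $S^{n_k}v$ genuinely lies in $X_0$ (so that $TSx=x$ may legitimately be applied to it at each step) and that the Baire-category argument requires separability of $X$; both are guaranteed by the hypotheses.
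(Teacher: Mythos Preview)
Your argument is correct and is precisely the standard proof of the Gethner--Shapiro criterion. Note, however, that the paper does not supply its own proof of this statement: Theorem~1.1 is quoted as a known criterion with a reference to \cite{Gethner-Shapiro} (see also \cite{Bayart-Matheron}, \cite{Erdmann-Peris}), so there is nothing in the paper to compare your proof against. What you have written is exactly the textbook route---construct the witness $w_k=u+S^{n_k}v$, use $T^{n_k}S^{n_k}=I$ on $X_0$, deduce transitivity along $\{n_k\}$, and conclude hypercyclicity via Baire; the mixing case with $n_k=k$ then follows verbatim.
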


A similar criterion, known as the chaoticity criterion, has been used to obtain chaotic operators in Banach spaces, cf. \cite{Bonilla-Erdmann1}. This criterion is very strong, and it has several powerful implications in linear dynamics; see \cite{Bayart-Matheron} and \cite{Erdmann-Peris}.

\begin{theorem}(\textsf{Chaoticity Criterion} \cite{Bonilla-Erdmann1}) \label{chaos}
Let $X$ be a separable Banach space, $X_0$ be a dense set in $X$, and let $T$ be a bounded operator on $X$. If there exists a map $S:X_0\rightarrow X_0$ such that
\[
\displaystyle \sum_{n\geq 0} T^nx\quad \mbox{and}\quad \sum_{n\geq 0} S^nx,
\]
are unconditionally convergent, and
\[
TSx=x
\]
for each $x\in X_0$, then the operator $T$ is chaotic and mixing on $X$. 
\end{theorem}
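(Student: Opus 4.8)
The plan is to establish the two halves of the conclusion—topological mixing and the density of periodic vectors—separately, with the mixing part following at once from the Gethner--Shapiro Criterion and the chaotic part requiring an explicit construction of periodic vectors together with a tail estimate for unconditionally convergent series.

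First I would note that the convergence of $\sum_{n\geq 0}T^nx$ and of $\sum_{n\geq 0}S^nx$ forces their general terms to vanish, so $T^nx\to 0$ and $S^nx\to 0$ as $n\to\infty$ for every $x\in X_0$. Since $S$ maps $X_0$ into $X_0$, the set $X_0$ is dense, and $TSx=x$ on $X_0$, Theorem~\ref{thm-hypc} with $n_k=k$ gives immediately that $T$ is mixing on $X$, hence hypercyclic and topologically transitive. Note that only plain convergence is used here; unconditionality will be needed only for the periodic vectors.

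Next, to produce periodic vectors, I would fix $x\in X_0$ and a positive integer $p$ and set
\[
z_p \;=\; \sum_{k\geq 0} S^{kp}x \;+\; \sum_{k\geq 1} T^{kp}x ,
\]
each series being a subseries of an unconditionally convergent one and hence convergent. Applying $T^p$ term by term, using $T^pS^{kp}x=S^{(k-1)p}x$ for $k\geq 1$, $T^pS^0x=T^px$, and $T^pT^{kp}x=T^{(k+1)p}x$, one sees that the shifted $S$-series absorbs the extra term $T^px$ and reassembles the $T$-series, so $T^pz_p=z_p$; thus $z_p$ is periodic for $T$. Since $z_p-x=\sum_{k\geq 1}S^{kp}x+\sum_{k\geq 1}T^{kp}x$, it remains to show $\|z_p-x\|\to 0$ as $p\to\infty$. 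This is the crux of the argument: I would invoke the standard characterisation of unconditional convergence, namely that for such a series $\sum a_n$ one has $\sup\{\,\|\sum_{n\in F}a_n\|: F\subseteq\{m,m+1,\dots\}\ \text{finite}\,\}\to 0$ as $m\to\infty$, and apply it to $\sum_{n}T^nx$ and $\sum_{n}S^nx$ with $m=p$ to bound both tails of $z_p-x$, uniformly, by a quantity tending to $0$. Hence each $x\in X_0$ is a limit of periodic vectors of $T$, so the periodic vectors are dense.

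Combining the density of periodic vectors with the hypercyclicity from the first step shows that $T$ is chaotic, which together with the mixing already proved completes the argument. The only genuinely delicate ingredient is the uniform tail estimate for unconditionally convergent series; the rest is bookkeeping with the relation $TSx=x$ and the term-by-term action of $T^p$ on the two series defining $z_p$.
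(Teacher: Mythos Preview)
The paper does not supply a proof of this statement: Theorem~\ref{chaos} is quoted from Bonilla and Grosse-Erdmann \cite{Bonilla-Erdmann1} as a ready-made tool, with no argument given in the text. So there is no ``paper's own proof'' to compare against.

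That said, your proposal is the standard argument and is essentially correct. The mixing half is immediate from Theorem~\ref{thm-hypc} once the general terms $T^nx$ and $S^nx$ tend to $0$. For the periodic vectors, your candidate
\[
z_p=\sum_{k\geq 0}S^{kp}x+\sum_{k\geq 1}T^{kp}x
\]
is the usual one; the verification $T^pz_p=z_p$ uses only continuity of $T^p$, the fact that $S$ maps $X_0$ into itself, and the relation $TSy=y$ on $X_0$ iterated $p$ times. The only point worth stating a little more carefully is the approximation step: the estimate you invoke is that for an unconditionally convergent series $\sum_n a_n$ and any $\varepsilon>0$ there is $N$ with $\bigl\|\sum_{n\in F}a_n\bigr\|<\varepsilon$ for every finite $F\subseteq\{N,N+1,\dots\}$; passing to the limit over finite sets gives the same bound for the full subseries $\sum_{k\geq 1}S^{kp}x$ and $\sum_{k\geq 1}T^{kp}x$ once $p\geq N$. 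With that made explicit, each $x\in X_0$ is a limit of periodic vectors and the chaos follows. (Note also that the theorem statement in the paper has a harmless misprint: the map is named $D$ but used as $S$; your reading is the intended one.)
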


\begin{theorem} (\textsf{Eigenvalue Criteria} \cite{Bayart-Matheron} and \cite{Erdmann-Peris}) \label{aa}
    	Let $T$ be a bounded operator on a separable complex Banach space $X$. Suppose that the subspaces
		\[
  X_{0}=span \{x \in X;\,\,\, Tx=\lambda x ,\,\,\,\lambda \in \mathbb{C},\,\,\, |\lambda|<1\}, 
  \]
  \[
  X_{1}=span\{x \in X;\,\,\, Tx=\lambda x ,\,\,\, \lambda \in \mathbb{C} ,\,\,\, |\lambda|> 1\},
  \]
  are dense in X.Then T is mixing, and in particular hypercyclic. In addition, if the subspace
		\[
     X_{2}= span \{x \in X;\,\,\, Tx=e^{\alpha \pi i}x ,\,\,\, \alpha \in \mathbb{Q}\},
         \] 
		is dense in X, then T is chaotic.
\end{theorem}

Rolewicz \cite{Rolewicz} showed that $\lambda B$ is hypercyclic on the sequence space $\ell^p$, where $|\lambda|>1$, $B$ is the unweighted backward shift and $1\leq p<\infty$. Salas \cite{Salas} provided a complete characterization of the hypercyclicity of classical unilateral and bilateral shifts. Hypercyclicity and chaos of weighted shifts on $F$-sequence spaces were characterized by Grosse-Erdmann \cite{Erdmann}. Also, see Costakis and Sambarino \cite{Costakis} for mixing shifts, and Bonet, Kalmes and Peris \cite{Bonet2} for dynamics of shifts on non-metrizable sequence spaces. It is well known that the backward shift on the Bergman space of the unit disc is a mixing and non-chaotic operator, cf. Gethner and Shapiro \cite{Gethner-Shapiro} and Grosse-Erdmann \cite{Erdmann}, respectively. We also refer to Bonet \cite{Bonet1}, Beise and M\"{u}ller \cite{Beise-Muller1}, Beise, Meyrath and M\"{u}ller \cite{Beise-Meyrath-Muller}, Bourdon and Shapiro \cite{Bourdon-Shapiro}, and M\"{u}ller and Maike \cite{Muller-Maike} for the dynamics related to the backward shift on analytic function spaces, (Bergman spaces, mostly).
 
  The paper is organized as follows. In Section $2$, we introduce the analytic function spaces $\ell^p_{a,b}$ and $c_{0,a,b}$, and obtain necessary and sufficient conditions for the weighted shift $B_w$ to be bounded on these spaces. In Section $3$, under some mild conditions we show that the operator $B_w$ on $\ell^p_{a,b}$ and $c_{0,a,b}$ is similar to a compact perturbation of a classical weighted shift on $\ell^p$ and $c_0$, respectively. Using this result, we compute the essential spectrum of $B_w$, which establishes the existence of hypercyclic subspaces for $B_w$. In Section $4$, we characterize the hypercyclicity, mixing, and chaos of $B_w$ in $\ell^p_{a,b}$ and $c_{0,a,b}$. Further, the dynamical properties of functions of the weighted shift $B_w$ are also studied.

\section{The analytic function spaces $\ell^p_{a,b}$ and $c_{0,a,b}$, and the co-ordinate functionals }

In this section, we introduce the spaces $\ell^p_{a,b}$, $1\leq p<\infty$ and $c_{0,a,b}$, and primarily, show that the co-ordinate functionals are bounded, and estimate useful upper bounds for their norms. We need the concept of equivalent Schauder bases in Banach spaces. Let $X$ and $Y$ be Banach spaces. Two Schauder bases, $\{u_{n}\}_{n=0}^{\infty}$ of X and $\{v_{n}\}_{n=0}^{\infty}$ of Y, are \textit{equivalent} if the convergence of $\sum_{n=1}^{\infty}a_{n}u_{n}$ is equivalent to that of $\sum_{n=1}^{\infty}a_{n}v_{n}$; see, for instance \cite{Lindenstrauss}, page $5$. 

Let $a=\{a_n\}_{n=0}^{\infty}$ and $b=\{b_n\}_{n=0}^{\infty}$ be two sequences of complex sequences, where $a_n\neq 0$ for all $n$. Let $\ell^p_{a,b}$ denote the space of all analytic functions on the unit disc, having a normalized Schauder basis consisting of polynomials of the  form $f_n(z)=(a_n+b_nz)z^n, ~~n\geq0$, which is equivalent to the standard basis in $\ell^p$, $1\leq p < \infty$, that is,  $f(z)_{}=\sum_{n=0}^{\infty}\lambda_{n}f_{n}(z) \in \ell^p_{a,b},$ if and only if
\[
\lVert f \rVert_{\ell^p_{a,b}}=\left(\sum_{n=0}^{\infty}\lvert\lambda_{n}\rvert^{p}\right)^{\frac{1}{p}}.
\]
For $\ell^2_{a,b}$, we assume something more, that is, $\{f_n\}$ to be an orthonormal basis.

Note that, for $a_{n}=1$ and $b_{n}=0$ for all $n\geq 0,$ we get the standard $\ell^{p},$ $1\leq p<\infty$ space of all analytic functions on the unit disc. We can similarly define the space $c_{0,a,b}$ which consists of analytic functions, for which $\{(a_n+b_nz)z^n:n\geq 0\}$ forms a normalized Schauder basis, equivalent to the standard basis in $c_0$. Moreover, for $f(z)=\sum_n \lambda_n f_n(z)$ in $c_{0,a,b}$, we have $\lim_{n\rightarrow \infty}\lambda_n=0$. Define the norm in $c_{0,a,b}$ as
\[
\|f\|_{c_{0,a,b}}:=\sup_{n\geq 0} |\lambda_n|.
\]
The Hilbert space case of $\ell^2_{a,b}$, was studied by Adams and McGuire, cf. \cite{Adams-McGuire} in the context of tridiagonal kernels and commutants of the forward shift operator. For $\ell^p_{a,b}$, observe that $\{f_n\}$ is an unconditional basis. When $b_n=0$ for all $n$, then $\ell^p_{a,b}$ and $c_{0,a,b}$ become the standard weighted $\ell^p$ and $c_0$ spaces. Below, we show that there is a natural open disc on which every function in $\ell^p_{a,b}$ is analytic. The same also holds for $c_{0,a,b}$. In the proof, we will also see that the norm expansion $f=\sum_{n\geq 0} \lambda_n f_n=\sum_{n\geq 0} \lambda_n (a_nz^n+b_nz^{n+1})$ in $\ell^p_{a,b}$ and $c_{0,a,b}$ imply that the series can be rearranged so that the Taylor series is absolutely convergent to $f(z)$ on the respective domains $\mathcal{D}_1$, $\mathcal{D}_2$ or $\mathcal{D}_3$.

\begin{theorem}
    For given complex sequences, $a=\{a_n\}$ and $b=\{b_n\}$, set
    \[
    \mathcal{D}_{1}:=\left\{ z \in \mathbb{C}:\sum_{n=0}^{\infty} \left(( \lvert a_{n}\rvert+\lvert b_{n}\rvert)\lvert z \rvert^{n} \right)^{q}<\infty\right\},
    \]
    where $\frac{1}{p}+\frac{1}{q}=1,$
    \[
    \mathcal{D}_{2}:==\left\{ z \in \mathbb{C}:\sup_{n\geq 0} ~(\lvert a_{n}\rvert+\lvert b_{n}\rvert)\lvert z\rvert^{n}<\infty\right\},
    \]
    and 
   \[
    \mathcal{D}_{3}:==\left\{ z \in \mathbb{C}:\sum_{n=0}^{\infty} (\lvert a_{n}\rvert+\lvert b_{n}\rvert)\lvert z \rvert^{n} <\infty\right\}.
    \] 
    If $f=\sum_{n=0}^{\infty}\lambda_{n}f_{n}\in \ell^{p}_{a,b},$ $1<p<\infty,$ then the series $\sum_{n=0}^{\infty}\lambda_{n}(a_{n}+b_{n}z)z^{n} $ converges uniformly and absolutely on compact subsets of $\mathcal{D}_{1}$. Also, the evaluation functional $f\longmapsto f(\lambda),$ is bounded for each $\lambda \in \mathcal{D}_{1}$.\\ The same conclusions hold true with respect to the domains $\mathcal{D}_2$ and $\mathcal{D}_3$, respectively, for the spaces $\ell^1_{a,b}$ and $c_{0,a,b}$.
\end{theorem}
\begin{proof} We provide the proof for $\ell^p_{a,b}$ when $1<p<\infty$, and it is essentially similar for $\ell^1_{a,b}$ and $c_{0,a,b}$.

Let $f\in \ell^p_{a,b}$ for $1<p<\infty$. Then $f$ has a norm expansion $f=\sum_{n=0}^{\infty}\lambda_{n}f_{n}$ and $\|f\|_{\ell^p_{a,b}}^p=\sum_{n\geq 0} |\lambda_n|^p$. Fix a closed ball $K:=|z|\leq R$ that lies in $\mathcal{D}_1$. Note that, we have
    \[
    \sup_{z\in K} ~\sum_{n=0}^{\infty} \left(( \lvert a_{n}\rvert+\lvert b_{n}\rvert)\lvert z\rvert^{n} \right)^{q}<\infty,
    \]
    where $\frac{1}{p}+\frac{1}{q}=1.$ Now, by using the Hölder's inequality, we get
    \begin{eqnarray*}
    \sum_{n=0}^{\infty} \lvert \lambda_{n}(a_{n}+b_{n}z)z^{n}\rvert &\leq& \text{max} \{1, \lvert z\rvert\}\sum_{n=0}^{\infty}\lvert\lambda_{n}\rvert(\lvert a_{n}\rvert+\lvert b_{n}\rvert)\lvert z \rvert^{n}\\
    &\leq& \text{max} \{1, \lvert z\rvert\} \left(\sum_{n=0}^{\infty} \lvert\lambda_{n}\rvert^{p}\right)^{\frac{1}{p}} \left(\sum_{n=0}^{\infty} ((\lvert a_{n}\rvert+\lvert b_{n}\rvert)\lvert z \rvert^{n})^{q}\right)^{\frac{1}{q}}\\
    &=& C \lVert f \rVert_{\ell^{p}_{a,b}}<\infty,
     \end{eqnarray*}
     where
     \[
     C=\max_{z\in K} \left \{1,|z|, \left(\sum_{n=0}^{\infty} ((\lvert a_{n}\rvert+\lvert b_{n}\rvert)\lvert z \rvert^{n})^{q}\right)^{\frac{1}{q}}\right \}.
     \]
     Hence, the series $\sum_{n=0}^{\infty}\lambda_{n}(a_{n}+b_{n}z)z^{n} $ converges absolutely and uniformly on compact subsets of $\mathcal{D}_{1}.$
\end{proof} 

An important consequence of the above theorem is that on any closed disk in $\mathcal{D}_j$, $j=1, 2, 3$, the series $f=\sum_{n=0}^{\infty}\lambda_{n}f_{n}$ in $\ell^p_{a,b}$ and $c_{0,a,b}$ can be rearranged as a power series so that Taylor series of $f(z)$ about the origin, converges point-wise to $f(z)$, and hence, we have $f(z)=\lambda_{0}a_{0}+\sum_{n=1}^{\infty}(\lambda_{n}a_{n}+\lambda_{n-1}b_{n-1})z^{n}$ point-wise. Equivalently, the evaluation functionals are continuous.

\begin{center}
\textit{We will always assume that $\mathcal{D}_1=\mathcal{D}_2=\mathcal{D}_3=\mathbb{D}$, where $\mathbb{D}$ is the open unit disc in the complex plane.}
\end{center}

The case of $\ell^2_{a,b}$ is of special importance as it is a reproducing kernel Hilbert space. Occasionally, we will infer the dynamics of a weighted shift $B_w$ from the general properties of the kernels of the reproducing kernel spaces on which $B_w$ is defined. We briefly recall the basics and essential properties of analytic (scalar valued) reproducing kernel Hilbert spaces, and refer to Aronszajn \cite{Aro} and Paulsen and Raghupati \cite{Paulsen}. A function $k : \D \times \D \rightarrow \mathbb{C}$ is called an \textit{analytic kernel} (or a reproducing kernel) if $z\mapsto k(z,\zeta)$ is analytic for each fixed $\zeta \in \mathbb{D}$ and 
$$\sum_{i,j = 1}^n \overline{\alpha_i}\alpha_jk(\zeta_i, \zeta_j) \geq 0,$$
for all choices of $\zeta_1,\ldots,\zeta_n \in \D$, $\alpha_1,\ldots,\alpha_n\in \mathbb{C}$, and $n \in \mathbb{N}$. For an analytic kernel $k(z,\zeta)$ over $\mathbb{D}$, there exists a unique Hilbert space $\mathcal{H}(k)$ of analytic functions on $\D$ such that $\text{span}~\{k(\cdot, \zeta): \zeta \in \D\}$ is dense in $\mathcal{H}(k)$ and
\begin{equation}\label{RP}
f(\zeta)=\big\langle f,k(.,\zeta)\big\rangle_{\mathcal{H}(k)},
\end{equation}
for all $f\in \mathcal{H}(k)$, $\zeta \in \D$. Here, the symbol $k(\cdot, \zeta)$ denotes the function $z\mapsto k(z,\zeta)$ on $\mathbb{D}$. Moreover, for $\alpha_1,\ldots,\alpha_n\in \mathbb{C}$ and $\zeta_1,\ldots,\zeta_n\in \mathbb{D}$, 
$$\|\sum_{j=1}^n \alpha_jk(.,\zeta_j)\|_{\mathcal{H}(k)}^2=\sum_{i,j = 1}^n \overline{\alpha_i} \alpha_j k(\zeta_i, \zeta_j),$$
which can be seen from \eqref{RP}. The Hilbert space $\mathcal{H}(k)$ is called the \textit{analytic reproducing kernel Hilbert space} associated to the kernel $k(z,\zeta)$. From \eqref{RP} it follows that the evaluation functional $E_\zeta:\mathcal{H}(k)\rightarrow \mathbb{C}$ is bounded for all $\zeta\in \mathbb{D}$, where $E_\zeta(f)=f(\zeta), \hspace{.1cm} f\in \mathcal{H}(k)$. From \eqref{RP}, it also follows that $k(z,\zeta)$ is co-analytic in $\zeta$.  

The following are noteworthy: $\text{(1)}$ an analytic kernel $k(z,\zeta)$ is the kernel for a Hilbert space $\mathcal{H}$ of analytic functions on $\mathbb{D}$ if and only if all the evaluation functionals are bounded on $\mathcal{H}$, the span $\{k(.,\zeta):\zeta\in \mathbb{D}\}$ is dense in $\mathcal{H}$, and 
$f(\zeta)=\big\langle f,k(.,\zeta)\big\rangle_{\mathcal{H}}$ for all $f\in \mathcal{H}$ and $\zeta\in \mathbb{D}$. The kernel function has a formula, namely $k(z,\zeta)=\sum_{n\geq 0}e_n(z)\overline{e_n(\zeta)}$ for any orthonormal basis $\{e_n\}_{n\geq 0}$ of the space for which $k(z,\zeta)$ is the kernel, cf. \cite{Paulsen}.\\ $\text{(2)}$ Several well known spaces have reproducing kernels. A standard example for an analytic reproducing kernel space is the diagonal space $\mathcal{H}^2(\beta)$: for a given $\beta=\{\beta_n\}_{n=0}^{\infty}$ of strictly positive reals, this space consists of analytic functions $f(z)=\sum_{n\geq 0}\lambda_n z^n$ on $\mathbb{D}$ such that $\|f\|^2:=\sum_{n\geq 0} |\lambda_n|^2/{\beta_n}<\infty$. As $\sqrt{\beta_n}z^n$, $n\geq 0$, forms an orthonormal basis for $\mathcal{H}^2(\beta)$, its kernel is given by $\sum_{n\geq 0} \beta_nz^n\overline{\zeta}^n$. The classical spaces of Hardy, Bergman, and Dirichlet over the unit disc, are analytic reproducing kernel spaces, whose kernels can be explicitly written using the aforementioned formula for $k(z,\zeta)$.

Kernels are useful in linear dynamics as well. It is known that, if $k(z,\zeta)$ is an analytic scalar kernel, then the derivatives 
\[
\frac{\partial^nk(.,0)}{\partial \overline{\zeta}^n}
\]
can give information on the dynamics of the adjoint of the multiplication by the independent variable on $\mathcal{H}(k)$, see \cite{Mundayadan-Sarkar}. We will use the following facts on the co-ordinate functionals to derive the necessary parts in the characterization of hypercyclicity, mixing and chaos of $B_w$ on $\ell^p_{a,b}$ and $c_{0,a,b}$. This result is known for the case of reproducing kernel spaces, cf. \cite{Mundayadan-Sarkar}.

\begin{proposition}\label{evaluation} 
    Let $X$ be a Banach space of analytic functions on $\mathbb{D}$, having bounded evaluation functionals at every point in $\mathbb{D}$. Then
    \[
    \frac{d^{n}ev_{z}}{dz^{n}}|_{z=0} \in X^{*}
    \]
    with the action given by
    \[
    \frac{d^{n}ev_{z}}{dz^{n}}|_{z=0}(f)= f^{(n)}(0),
    \]
    for all $f \in X$ and $n\geq 0$.
\end{proposition}
\begin{proof}
     Let $ev_{z}$ be the evaluation functional at $z\in\mathbb{D}$, given by $ev_z(f)=f(z)$, where $f\in X$. Note that the $X^*$-valued function
     \[
     z \mapsto ev_{z}
     \]
     is point-wise analytic on $\mathbb{D}$, and hence it analytic in the norm of $X^*$ since the analyticity in the strong operator topology and the same in the operator norm topology are equivalent, cf. Chapter 5, Theorem 1.2, \cite{Taylor}. Thus, using the closed graph theorem, it follows that
 \[
 \frac{d^{n}ev_{z}}{dz^{n}}|_{z=0} \in X^{*}
 \]
 for $n\geq 0$. The power series expansion (in the norm of $X^*$)
 \[
 ev_{z}=\sum_{n\geq 0}\frac{1}{n!} z^{n}\frac{d^{n}ev_{z}}{dz^{n}}|_{z=0} ,
 \]
along with the observation
 \[
ev_{z}(f)=f(z)=\sum_{n\geq 0}\frac{1}{n!} z^{n} f^{
(n)}(0),
 \]
 for all $f \in X$ yields that
 \[
 \frac{d^{n}ev_{z}}{dz^{n}}|_{z=0}(f)= f^{(n)}(0),
 \]
 which completes the proof.
 \end{proof}

 The above result in the case of a Hilbert space of analytic functions can be stated in terms of the analytic kernel of the space, and its proof uses the analyticity of $k(z,\zeta)$ in the variable $z$, and co-analyticity in $\zeta$. Indeed, since the map $\zeta \mapsto k(.,\zeta)$ is co-analytic, we have the power series of $k(.,\zeta)$ in terms of $\overline{\zeta}$. Now, proceed as in the proof of the previous proposition. Also, see \cite{Mundayadan-Sarkar} and \cite{Curto}.

\begin{proposition}  \label{partial}
If $\mathcal{H}(k)$ is an analytic reproducing kernel space over $\mathbb{D}$, then 
\[
\frac{\partial^nk(.,0)}{\partial \overline{\zeta}^n} \in \mathcal{H}(k)\hspace{.5cm} \text{and}\hspace{.5cm} 
f^{(n)}(0)=\big\langle f, \frac{\partial^nk(.,0)}{\partial \overline{\zeta}^n}\big \rangle_{\mathcal{H}(k)},
\]
for all $n\geq 0$ and $f\in \mathcal{H}(k)$. Moreover,
\[
\left \| \frac{\partial^nk(.,0)}{\partial \overline{\zeta}^n}\right\|_{\mathcal{H}(k)}=\left(\frac{\partial^{2n}k}{\partial z^n\partial \overline{\zeta}^n} (0,0)\right)^{1/2}.
\]
\end{proposition}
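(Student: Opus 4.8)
The plan is to recognize $\partial^n k(\cdot,0)/\partial\overline{w}^n$ as the Riesz representative in $\mathcal{H}(k)$ of the point-derivative functional $f\mapsto f^{(n)}(0)$. First I would establish that this functional is bounded: from the reproducing property \eqref{RP} in the scalar case and the Cauchy--Schwarz inequality one has the pointwise bound $|f(w)|\le\|f\|_{\mathcal{H}(k)}\,k(w,w)^{1/2}$ for all $f\in\mathcal{H}(k)$ and $w\in\mathbb{D}$. Fixing $0<r<1$ and setting $M_r:=\sup_{|w|=r}k(w,w)^{1/2}$, which is finite since $k$ is continuous on $\mathbb{D}\times\mathbb{D}$, Cauchy's integral formula $f^{(n)}(0)=\tfrac{n!}{2\pi i}\oint_{|w|=r}w^{-n-1}f(w)\,dw$ gives $|f^{(n)}(0)|\le n!\,r^{-n}M_r\,\|f\|_{\mathcal{H}(k)}$. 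Hence $\Lambda_n\colon f\mapsto f^{(n)}(0)$ is a bounded linear functional on $\mathcal{H}(k)$, and by the Riesz representation theorem there is a unique $g_n\in\mathcal{H}(k)$ with $f^{(n)}(0)=\langle f,g_n\rangle_{\mathcal{H}(k)}$ for every $f\in\mathcal{H}(k)$.

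Next I would identify $g_n$ with $\partial^n k(\cdot,0)/\partial\overline{w}^n$. Using the reproducing kernel, for each $z\in\mathbb{D}$,
\[
g_n(z)=\langle g_n,k(\cdot,z)\rangle_{\mathcal{H}(k)}=\overline{\langle k(\cdot,z),g_n\rangle_{\mathcal{H}(k)}}=\overline{\Big(\tfrac{d^n}{du^n}\big|_{u=0}k(u,z)\Big)};
\]
invoking the Hermitian symmetry $k(u,z)=\overline{k(z,u)}$ together with the fact that $u\mapsto k(z,u)$ is anti-holomorphic on $\mathbb{D}$ (so that $\tfrac{d^n}{du^n}\overline{k(z,u)}=\overline{\partial^n k(z,u)/\partial\overline{u}^n}$), then evaluating at $u=0$ and conjugating, one obtains $g_n(z)=\partial^n k(z,0)/\partial\overline{w}^n$. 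Equivalently, writing $k(z,w)=\sum_m e_m(z)\overline{e_m(w)}$ for an orthonormal basis $\{e_m\}$ of $\mathcal{H}(k)$, both sides are seen to equal $\sum_m\overline{e_m^{(n)}(0)}\,e_m$. In particular $\partial^n k(\cdot,0)/\partial\overline{w}^n\in\mathcal{H}(k)$, which together with the displayed representation proves the first two assertions.

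Finally, for the norm I would apply the representation to $f=g_n$ itself:
\[
\left\|\frac{\partial^n k(\cdot,0)}{\partial\overline{w}^n}\right\|_{\mathcal{H}(k)}^{2}=\langle g_n,g_n\rangle_{\mathcal{H}(k)}=g_n^{(n)}(0)=\frac{d^n}{dz^n}\bigg|_{z=0}\frac{\partial^n k(z,0)}{\partial\overline{w}^n}=\frac{\partial^{2n}k}{\partial z^n\,\partial\overline{w}^n}(0,0),
\]
the last equality being the commutation of the two differentiations, legitimate because $k$ is jointly real-analytic on $\mathbb{D}\times\mathbb{D}$ (holomorphic in the first variable, anti-holomorphic in the second); taking square roots gives the stated formula, and the right-hand side is nonnegative, being $\|g_n\|^2$. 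The only step demanding genuine care is the chain of conjugations in the identification of $g_n$ --- in particular that $\partial/\partial\overline{w}$ acts as an ordinary derivative on the anti-holomorphic variable and that the mixed partials of $k$ commute; everything else is the standard Cauchy-estimate-plus-Riesz argument, and the orthonormal-basis computation indicated above sidesteps the subtlety entirely.
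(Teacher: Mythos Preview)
Your argument is correct. The boundedness of the point-derivative functional via Cauchy estimates, the Riesz representation, the identification of the representer through the reproducing property and Hermitian symmetry $k(u,z)=\overline{k(z,u)}$, and the norm computation by applying the representation to $g_n$ itself all go through as you describe; the alternative orthonormal-basis verification you mention is also a clean way to handle the identification step.

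There is nothing to compare against here: the paper does not supply a proof of this proposition. It states the result as a known fact and refers the reader to \cite{Mundayadan-Sarkar} (Mundayadan and Sarkar, \emph{Linear dynamics in reproducing kernel Hilbert spaces}). Your write-up would serve perfectly well as a self-contained proof in place of that citation.
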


We can now observe that the Hilbert space $\ell^2_{a,b}$ is a reproducing kernel Hilbert space, and $\{f_n\}_{n=0}^{\infty}$ forms an orthonormal basis, where 
\[
 f_{n}(z) =  (a_{n}+b_{n}z)z^{n}, ~~n\geq 0.
\]
Since, $k(z,\zeta)=\sum_{n \geq 0}  f_{n}(z)\overline{f_{n}(\zeta)},$ we get the kernel of $\ell^2_{a,b}$ as
\begin{equation}\label{a}
 k(z,\zeta)= |a_{0}|^{2}+ \sum_{n \geq 1} ( |a_{n}|^{2}+ |b_{n-1}|^{2} )z^{n}\overline{\zeta}^{n}+ \sum_{n \geq 0}  a_{n}\overline{b_{n}}z^{n}\overline{\zeta}^{n+1}+  \sum_{n \geq 0}  \overline{a_{n}}b_{n}z^{n+1}\overline{\zeta}^{n},
 \end{equation}
 for all $z,\zeta \in \mathbb{D}$. This is an example of an analytic tridiagonal kernel. The space $\ell^2_{a,b}$ is called a tridiagonal kernel space. For more on the terminology of tridiagonal kernels, we refer to Adams and McGuire \cite{Adams-McGuire} wherein the authors showed striking differences between the usual weighted forward shifts on $\ell^2$ and the tridiagonal shifts in terms of their commutants.
\begin{proposition}\label{N-L}
    Let $L_n$ denote the co-ordinate functional 
    \[
    f\mapsto \frac{f^{(n)}(0)}{n!},
    \]
    defined on $\ell^p_{a,b}$ or $c_{0,a,b}$, where $n\geq 1$. Then, the norm of the functional $L_n$ is
    \[
    \|L_n\|\leq (|a_n|^q+|b_{n-1}|^q)^{1/q},
    \]
    where $1<p<\infty$ and $1/p+1/q=1$. For $p=1$, we have $\|L_n\|\leq \max\{|a_n|,|b_{n-1}|\}$. Also, for the case of $c_{0,a,b}$, $\|L_n\|\leq |a_n|+|b_{n-1}|$.
\end{proposition}
\begin{proof}
We provide the proof only for the case $1<p<\infty$. If $f \in \ell^{p}_{a,b},$ then
 \[
   f(z)=\sum_{n=0}^{\infty} \lambda_{n}f_{n}(z)=\sum_{n=0}^{\infty}\lambda_{n}(a_{n}+b_{n}z)z^{n},
 \]
 and $\|f\|^p_{\ell^p_{a,b}}=(\sum_{n\geq 0}|\lambda_n|^p)^{1/p}$. Rearranging into a power series, we obtain 
 \[
 f(z)=\lambda_0a_0+\sum_{n\geq 1} (\lambda_na_n+\lambda_{n-1}b_{n-1})z^n, 
 \]
 and so, 
 \[
 L_n(f)=\lambda_na_n+\lambda_{n-1}b_{n-1}.
 \]
 It follows, by H\"{o}lder inequality, that 
 \[
 |L_n(f)|\leq \|f\|_{\ell^p_{a,b}} (|a_n|^q+|b_{n-1}|^q)^{1/q},
 \]
 that is, $\|L_n\|\leq (|a_n|^q+|b_{n-1}|^q)^{1/q}$.
\end{proof}

\section{The weighted shift $B_w$: boundedness and a compact perturbation result}
For a given sequence $w:=\{w_n\}_{n\geq 0}$ of non-zero complex numbers, we define the weighted backward shift $B_w$ on $\ell^p_{a,b}$ and also on $c_{0,a,b}$ to be the operator
\begin{equation}
  (B_wf)(z)=\sum_{n=0}^{\infty}w_{n+1}\lambda_{n+1}z^n,
\end{equation}
for $f (z)=\sum_{n=0}^{\infty}\lambda_{n}z^n$ in $\ell^p_{a,b}$ or $c_{0,a,b}$. We derive conditions for the shift operator $B_w$ to be bounded on these spaces. For this, we express a monomial $z^n$ as an expansion in the orthonormal basis $\{f_n\}$; see \eqref{monomial-expansion} below. Such an expression will help us to find estimates of $\|z^n\|_{\ell^p_{a,b}}$ in terms of $\{a_n\}$ and $\{b_n\}$, (Proposition \ref{estimate}). Indeed, fix $n \geq 0,$ and write $z^{n}=\sum_{j=0}^{\infty}\alpha_{j}f_{j}$ for some $\alpha_{j} \in \mathbb{C}.$ Then $z^{n}=\alpha_{0}a_{0}+\sum_{j=1}^{\infty}(\alpha_{j-1}b_{j-1}+\alpha_{j}a_{j})z^{j}.$ Thus, comparing the coefficients, we have $\alpha_{0}=\alpha_{1}=\cdot \cdot \cdot=\alpha_{n-1}=0,$ and $\alpha_{n}=\frac{1}{a_{n}}$. Since $\alpha_{n+k-1}b_{n+k-1}+\alpha_{n+k}a_{n+k}=0,$ it follows that 
\[
\alpha_{n+k}=-\frac{\alpha_{n+k-1}b_{n+k-1}}{a_{n+k}},
\]
and thus 
\[
\alpha_{n+k}=\frac{(-1)^{k}}{a_{n}}\frac{b_{n}b_{n+1}\cdot\cdot \cdot b_{n+k-1}}{a_{n+1}a_{n+2}\cdot\cdot \cdot a_{n+k}}, \,\,\,\, (k\geq 1).
\]
This implies 
\begin{equation}\label{monomial-expansion}
z^{n}=\frac{1}{a_{n}}\sum_{j=0}^{\infty}(-1)^{j}(\frac{\prod_{k=0}^{j-1}b_{n+k}}{\prod_{k=0}^{j-1}a_{n+k+1}})f_{n+j}, \,\,\,\, (n\geq 0),
\end{equation}
where the term corresponding to $j=0$ is $1$. (Also, see \cite{Adams-McGuire}, p. 727.) The above expansion will be used repeatedly.

 To obtain necessary and sufficient conditions for $B_w$ to be bounded on $\ell^p_{a,b}$ and $c_{0,a,b}$, we compute the matrix of the operator $B_w$ with respect to the normalizes Schauder basis $\{f_n\}_{n\geq 0}$ and then, study the matrix operator on $\ell^p$ and $c_0$. See \cite{Adams-McGuire} for a similar study on tridiagonal shifts.

\begin{proposition} \label{matri-B}
The matrix representation of $B_w$ with respect to the (ordered) Schauder basis $\{f_{n}\}_{n\geq 0}$ is 
 \begin{equation}\label{matrix}
  [B]:=\begin{bmatrix}
 	\frac{w_{1}b_{0}}{a_{0}} & \frac{w_{1}a_{1}}{a_{0}} & 0 & 0 & 0 &\cdots\\
    -\frac{w_{1}b_{0}^{2}}{a_{0}a_{1}} & c_{1} & \frac{w_{2}a_{2}}{a_{1}}  & 0 & 0 &\ddots \\
	\frac{w_{1}b_{0}^{2}b_{1}}{a_{0}a_{1}a_{2}}& -\frac{c_{1}b_{1}}{a_{2}} & c_{2} & \frac{w_{3}a_{3}}{a_{2}} & 0 & \ddots\\
	-\frac{w_{1}b_{0}^{2}b_{1}b_{2}}{a_{0}a_{1}a_{2}a_{3}}& \frac{c_{1}b_{1}b_{2}}{a_{2}a_{3}} & 	-\frac{c_{2}b_{2}}{a_{3}} & c_{3} & 0 &\ddots\\
	\frac{w_{1}b_{0}^{2}b_{1}b_{2}b_{3}}{a_{0}a_{1}a_{2}a_{3}a_{4}}&-\frac{c_{1}b_{1}b_{2}b_{3}}{a_{2}a_{3}a_{4}} & 	\frac{c_{2}b_{2}b_{3}}{a_{3}a_{4}} & -\frac{c_{3}b_{3}}{a_{4}} &\ddots & \ddots\\
	 \vdots & \vdots & \vdots & \ddots&\ddots&\ddots
	\end{bmatrix}.
	\quad
	\end{equation}
\end{proposition}
\begin{proof}
Recall the Schauder basis $f_{n}(z) =(a_{n}+b_{n}z)z^{n},n\geq 0$, in $\ell^p_{a,b}$ and $c_{0,a,b}$. Note that $B_w(f_{n})(z)=w_{n}a_{n}z^{n-1}+w_{n+1}b_{n}z^{n},~n\geq 1$. Also, $f_{0}(z) =a_{0}+b_{0}z$ and
\[
B_w(f_{0})(z)=w_{1}b_{0}=\frac{w_{1}b_{0}}{a_{0}}f_{0}-\frac{w_{1}b_{0}^{2}}{a_{0}a_{1}}f_{1}+\frac{w_{1}b_{0}^{2}b_{1}}{a_{0}a_{1}a_{2}}f_{2}-\frac{w_{1}b_{0}^{2}b_{1}b_{2}}{a_{0}a_{1}a_{2}a_{3}}f_{3}+\cdots.
\]
For $n\geq 1$, we have
\[
B_{w}(f_{n})(z)=w_{n}a_{n}z^{n-1}+w_{n+1}b_{n}z^{n}=\frac{w_{n}a_n}{a_{n-1}}f_{{n-1}} + (\frac{w_{n+1}b_{n}}{a_{n}}-\frac{w_{n}a_n}{a_{n-1}}\frac{b_{n-1}}{a_{n}})a_{n} z^{n },
\]
by putting the value of $z^{n-1}$. Setting
\[
c_{n}:=w_{n+1}\frac{b_{n}}{a_{n}}-w_{n}\frac{b_{n-1}}{a_{n-1}},
\]
we immediately get $
B_{w}(f_{n})(z)=\frac{w_{n}a_{n}}{a_{n-1}}f_{{n-1}}+c_{n}a_{n} z^{n }, ~~~n\geq 1.$ Now, the matrix representation of $B_{w}$ can be obtained from the following expressions:
\[
B_{w}(f_{1})(z)=\frac{w_{1}a_1}{a_{0}}f_{{0}}+c_{1}f_{{1}}-\frac{c_{1}b_{1}}{a_{2}}f_{{2}}+\frac{c_{1}b_{1}b_{2}}{a_{2}a_{3}}f_{{3}}-\cdots,
\]
\[
B_{w}(f_{2})(z)=\frac{w_{2}a_2}{a_{1}}f_{{1}}+c_{2}f_{{2}}-\frac{c_{2}b_{2}}{a_{3}}f_{{3}}+\frac{c_{2}b_{2}b_{3}}{a_{3}a_{4}}f_{{4}}-\cdots,
\]
and so on. These expressions give rise to the matrix of $B_{w}$ with respect to $\{f_n\}_{n=0}^{\infty}$, as in the proposition.
\end{proof}

\noindent Compare the above matrix for the unweighted shift, with that of a left inverse of the multiplication operator $\big(Sf\big)(z)=zf(z)$ defined on a tridiagonal space, cf. Das and Sarkar \cite{Das-Sarkar}, Proposition $3.1$.

We now determine necessary and sufficient conditions under which the above (formal) matrix defines a bounded operator on $\ell^p$ and $c_{0,a,b}$. Equivalently, this gives boundedness results for $B_w$ acting on $\ell^p_{a,b}$ and $c_{0,a,b}$.

\begin{proposition}
If $B_{w}$ is bounded on $\ell^{p}_{a,b}$ and $c_{0,a,b}$, then $\{\frac{w_{n+1}a_{n+1}}{a_{n}}\}_{n\geq 1}$ and $\{c_n\}_{n\geq 1}$ are bounded.
 \end{proposition}
 \begin{proof}
 Let $B_{w}$ be bounded on $\ell^{p}_{a,b}$, $1\leq p<\infty$. Then the matrix $[B]$ induces a bounded operator on $\ell^p$. Let $v_n$ be the $n$-th column of $[B]$. Operating $[B]$ on the subset $\{e_n\}_{n\geq 1}$ of the standard orthonormal basis in $\ell^p$, since $[B](e_n)=v_n$, we get that 
 \[
 \sup_{n}\|v_n\|<\infty.
 \]
 On the other hand,
 \[
 \|v_n\|_{\ell^p}^p\geq \big|\frac{w_{n}a_n}{a_{n-1}}\big|^p+|c_n|^p,\hspace{.5cm} n\geq 1.
 \]
 This implies the necessary conditions for $\ell^p_{a,b}$, as in the proposition. The case of $c_{0,a,b}$ is similar.
 
\end{proof}
   
The following theorem gives a (general) sufficient condition for $B_{w}$ to be bounded. 

 \begin{theorem}\label{decomposition}
   If 
   \[
 \sup_{n\geq 1}~\Big\{\lvert\frac{w_{n+1}a_{n+1}}{a_n}\rvert,~|c_n|\Big\}<\infty,~ \text{and}
 \]
 \[
 \sum_{n=1}^{\infty}\max \left \{ \left|\frac{w_{1}b_0^2b_1\cdots b_{n-1}} {a_0a_1\cdots a_n}\right|, ~\sup_{j\geq 1} ~\left|\frac{c_jb_jb_{j+1}\cdots b_{j+n-1}}{a_{j+1}a_{j+2}\cdots a_{j+n}}\right|\right \}<\infty,
   \]
   then $B_{w}$ is bounded on $\ell^{p}_{a,b}$ and $c_{0,a,b}$, where $1\leq p<\infty$.
   \end{theorem}
   \begin{proof}
Recall the matrix representation of $B_{w}$. We can split this matrix as a formal series of infinite matrices as follows:
 $$
 [B] =
 \begin{bmatrix}
 	0 & \frac{w_{1}a_{1}}{a_{0}} & 0 & 0 & \cdots\\
    0 & 0 & \frac{w_{2}a_{2}}{a_{1}}  & 0 & \ddots \\
	0& 0 &0 &\frac{w_{3}a_{3}}{a_{2}} & \ddots\\
	0& 0 & 	0 & 0  &\ddots\\
	 \vdots & \vdots & \vdots & \ddots & \ddots
	\end{bmatrix}
 +
 \begin{bmatrix}
 	\frac{w_{1}b_{0}}{a_{0}} & 0 & 0 & 0 &\cdots\\
    0 & c_1  & 0 & 0 & \ddots \\
	0 & 0 & c_{2} & 0 & \ddots\\
	0 & 0 & 0 & c_{3} & \ddots\\
	 \vdots & \vdots & \vdots & \ddots&\ddots
	\end{bmatrix}
	 + 
 \begin{bmatrix}
 	0 & 0 & 0 & \cdots\\
    -\frac{w_{1}b_{0}^{2}}{a_{0}a_{1}} & 0 & 0 &\ddots \\
	0 & -\frac{c_{1}b_{1}}{a_{2}} & 0 &\ddots\\
	0 & 0 & 	-\frac{c_{2}b_{2}}{a_{3}} &\ddots\\
	 \vdots & \vdots & \ddots & \ddots
	\end{bmatrix}
	 +\ldots,
 $$
 \vspace{.2cm}
 
\noindent which is a (formal) series  $[B_{\alpha}]+[D]+\sum_{n=1}^{\infty} [F_{n}]$. Here, $[B_{\alpha}]$ is the matrix of the standard weighted backward shift $B_{\alpha}(e_i)\mapsto \alpha_i e_{i-1}$ on $\ell^p$, $i\geq 1$, having weights 
\[
\alpha_i=w_{i}\frac{a_i}{a_{i-1}}, \hspace{1cm} (i\geq 1),
\]
and $[D]$ is the matrix of the diagonal operator 
\[
\text{diag}~ (\frac{w_{1}b_0}{a_0},c_1,c_2,\cdots)
\]
on $\ell^p$. The matrix $[F_n]$ is obtained by deleting all the entries of $[B]$, except those at the $n$-th subdiagonal, where $n\geq 1$. Observe that $[F_n]$ is the matrix of suitable powers of a weighted forward shift $F_n$ for $n\geq 1$.\\ 
It follows, respectively by the first two assumptions in the theorem, that the weighted shift $B_{\alpha}$ and the diagonal operator $D$ are bounded on $\ell^p$. Since
\[
\|F_n\|=\max \left \{ \left|\frac{w_{1}b_0^2b_1\cdots b_{n-1}} {a_0a_1\cdots a_n}\right|, ~\sup_{j\geq 1} ~\left|\frac{c_jb_jb_{j+1}\cdots b_{j+n-1}}{a_{j+1}a_{j+2}\cdots a_{j+n}}\right|\right \},
\]
the third condition in the theorem gives that $F_n$ is bounded, and
\[
\sum_{n\geq 1}\|F_n\|<\infty
\]
with respect to the operator norm. Hence, the shift $B_{w}$ is bounded on $\ell^{p}_{a,b}$. This completes the proof of the theorem.
\end{proof}
\begin{remark}
We note that the unweighted backward shift $B$ is a left-inverse of the multiplication operator $(Sf)(z)= zf(z)$ on $\ell^p_{a,b}$. In the case of $p=2$, a closely related left inverse $B_1$ of $S$ was studied in \cite{Das-Sarkar}, wherein the authors obtained conditions for the boundedness of $B_1$. The matrices of $B$ and $B_1$ are almost the same, except the difference in the first columns. In the weighted case, the following assumptions for the boundedness are strong, compared to those in the above theorem. Indeed, the conditions
\begin{equation}\label{sup-limsup}
\sup_{n\geq 1}~\lvert w_{n+1}\frac{a_{n+1}}{a_{n}}\rvert<\infty \hspace{.4cm} \text{and} \hspace{.4cm} \lim \sup_{n}\left \lvert \frac{b_{n}}{a_{n+1}}\right \rvert<1
\end{equation}
imply those in Theorem \ref{decomposition}. To see this, writing 
\[
c_{n}=\frac{b_{n}}{a_{n+1}}\frac{w_{n+1}a_{n+1}}{a_{n}}-\frac{w_{n}a_n}{a_{n-1}}\frac{b_{n-1}}{a_{n}}, \hspace{.5cm} (n\geq 1),
\]
we can see that $\{c_n\}$ is bounded. Moreover, since $\lim \sup_{n}\left \lvert \frac{b_{n}}{a_{n+1}}\right \rvert<1$, there exist $r<1$ and $N\in \mathbb{N}$ such that $\left \lvert\frac{b_{n}}{a_{n+1}}\right \rvert< r$, for $n\geq N$. From this, the remaining conditions in Theorem \ref{decomposition} follow. 
\end{remark}


Under some mild assumptions on $\{a_n\}$ and $\{b_n\}$, we prove that the shift $B_{w}$ acting on $\ell^p_{a,b}$ and $c_{0,a,b}$ are similar to the sum $B_{\alpha}+K$ on $\ell^p$ and $c_0$, respectively, for a suitable weighted backward shift on $\ell^p$ or $c_0$, and a compact operator $K$. Using this perturbation result, we compute the essential spectrum of the shift $B_{w}$ for the case $1<p<\infty$. These results are of independent interest as well.

An operator $T$ on a Banach space is called \textit{Fredholm} if its kernel Ker $T$ and the quotient $X/T(X)$ of the range of $T$, are of finite dimension. In case $X$ is reflexive, then $T$ is Fredholm if and only if Ker $T$ and Ker $T^*$ are finite dimensional. The essential spectrum $\sigma_e(T)$, of an operator $T$ on a complex Banach space $X$ is the set of all $\lambda\in \mathbb{C}$ such that $T-\lambda I$ is not Fredholm, that is,
\[
\sigma_e(T)=\{\lambda\in\mathbb{C}:~\text{dim~Ker} ~(T-\lambda I)=\infty~\text{or dim Ker} ~(T^*-\lambda I)=\infty\},
\]
where $T^*$ is the adjoint of $T$, cf. Bayart and Matheron \cite{Bayart-Matheron} and Douglas \cite{Douglas}. The essential spectrum plays a key role in the investigation of hypercyclic subspaces; see the section $4$. 

In the proof of the following theorem, we use a well known fact: $\sigma_e(T)$ is invariant under a compact perturbation, that is,
\[
\sigma_e(T+K)=\sigma_e(T)
\]
for every compact operator $K$. 

\begin{theorem}\label{prop}
Assume that 
\[
\sup_n |w_{n+1}\frac{a_{n+1}}{a_{n}}|<\infty, ~\limsup_n |\frac{b_n}{a_{n+1}}|<1,~ and ~\lim_n \left|w_{n+1}\frac{b_n}{a_n}-w_{n}\frac {b_{n-1}}{a_{n-1}}\right|=0. 
\]
Then the following hold.
\begin{enumerate}
\item[(i)] The operator $B_{w}$ on $\ell^p_{a,b}$ (and $c_{0,a,b}$) is similar to $B_{\alpha}+K$ for some compact operator $K$ and the weighted backward shift $B_{\alpha}$ on the sequence space $\ell^p$ ($c_0$, respectively), where the weight sequence $\alpha=(\alpha_n)$ is given by 
\[
\alpha_n=w_{n}\frac{a_{n}}{a_{n-1}},\hspace{0.2cm} n\geq 1.
\]
\item[(ii)] On $\ell^p_{a,b}$, $1<p<\infty$, the essential spectrum $\sigma_e(B_w)$ is the annulus
\[
\sup_{n\geq 1}\left (\inf_{k\geq 1}\left|(w_{k+1}w_{k+2}\cdots w_{k+n})\frac{{a}_{k+n}}{a_k}\right|\right)^{1/n} ~\leq |z|\leq\inf_{n\geq 1} \left(\sup_{k\geq 1}\left|(w_{k+1}w_{k+2}\cdots w_{k+n})\frac{{a}_{k+n}}{a_k}\right|\right)^{1/n}.
\]
\end{enumerate}
\end{theorem}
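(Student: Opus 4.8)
The plan is to build directly on Theorem \ref{decomposition} and the matrix decomposition $[B]=[B_w]+[D]+\sum_{n\geq1}[F_n]$ developed there, and then read off the essential spectrum from the known formula for the essential spectrum of a weighted backward shift. For part (i), I would first note that the three hypotheses are exactly (a strengthening of) those in the Remark following Theorem \ref{decomposition}: $\sup_n|a_{n+1}/a_n|<\infty$ and $\limsup_n|b_n/a_{n+1}|<1$ already guarantee boundedness, so $B$ is bounded and, via the orthonormal basis $\{f_n\}$, unitarily equivalent to the operator on $\ell^2$ with matrix $[B]$. The additional hypothesis $\lim_n|c_n|=0$, where $c_n=b_n/a_n-b_{n-1}/a_{n-1}$, forces the diagonal operator $D=\mathrm{diag}(b_0/a_0,c_1,c_2,\dots)$ to be compact (it is a norm-limit of finite-rank truncations). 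It remains to show $\sum_{n\geq1}F_n$ is compact. Each $F_n$ is a weighted forward-shift-type operator supported on the $n$-th subdiagonal, with $\|F_n\|=\max\{|b_0^2b_1\cdots b_{n-1}/(a_0\cdots a_n)|,\ \sup_{j\geq1}|c_jb_j\cdots b_{j+n-1}/(a_{j+1}\cdots a_{j+n})|\}$. Using $\limsup_n|b_n/a_{n+1}|<1$, pick $r<1$ and $N$ with $|b_n/a_{n+1}|<r$ for $n\geq N$; for the second term in the max, when $j\geq N$ each factor $|b_{j+k}/a_{j+k+1}|<r$ so that part is $\lesssim r^n$, while for the finitely many $j<N$ one absorbs the bounded quantities $|c_j|$ and the fixed head $|b_j\cdots b_{N-1}/(a_{j+1}\cdots a_N)|$ into a constant and again gets a geometric tail $C r^{n-N}$; the first term $|b_0^2b_1\cdots b_{n-1}/(a_0\cdots a_n)|$ is handled the same way. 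Hence $\|F_n\|\leq C r^{n}$ eventually, so $\sum_n\|F_n\|<\infty$ and $\sum_n F_n$ converges in operator norm. Each $F_n$ is a norm-limit of its finite-rank truncations (a single weighted subdiagonal with weights tending to $0$ — indeed the weights are bounded by the geometric estimate only in aggregate, but along each fixed $F_n$ the entries still go to $0$ because numerator/denominator ratios are eventually $<r<1$ and the whole column sequence is square-summable), hence compact; a norm-convergent sum of compacts is compact. Therefore $K:=D+\sum_{n\geq1}F_n$ is compact and $B$ is unitarily equivalent to $B_w+K$ with $w_n=a_n/a_{n-1}$, proving (i).

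For part (ii), invoke the fact quoted just before the theorem, $\sigma_e(B_w+K)=\sigma_e(B_w)$, so it suffices to compute the essential spectrum of the weighted backward shift $B_w$ on $\ell^2$ with weights $w_n=a_n/a_{n-1}$. This is classical: for a (bounded) unilateral weighted backward shift with weight sequence $(w_n)_{n\geq1}$, the spectrum and the essential spectrum both equal the closed annulus $\{z:\ r_-\leq|z|\leq r_+\}$ (and the spectrum is all of the disc of radius $r_+$ in the unilateral case, but the essential spectrum is the annulus), where the outer radius is $r_+=\lim_{n}\sup_{k}|w_{k+1}\cdots w_{k+n}|^{1/n}=\inf_n\sup_k|w_{k+1}\cdots w_{k+n}|^{1/n}$ and the inner radius is $r_-=\lim_n\inf_k|w_{k+1}\cdots w_{k+n}|^{1/n}=\sup_n\inf_k|w_{k+1}\cdots w_{k+n}|^{1/n}$; I would cite Shields \cite{Shields} for these formulas (submultiplicativity/supermultiplicativity of $n\mapsto\sup_k\log|w_{k+1}\cdots w_{k+n}|$ gives that the $\inf_n$ and $\sup_n$ coincide with the limits by Fekete's lemma). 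Now substitute $w_{k+1}w_{k+2}\cdots w_{k+n}=\frac{a_{k+1}}{a_k}\cdot\frac{a_{k+2}}{a_{k+1}}\cdots\frac{a_{k+n}}{a_{k+n-1}}=\frac{a_{k+n}}{a_k}$; this telescoping is the one genuinely pleasant step. Plugging in gives exactly
\[
\sup_{n\geq1}\Big(\inf_{k\geq1}\big|\tfrac{a_{k+n}}{a_k}\big|\Big)^{1/n}\ \leq\ |z|\ \leq\ \inf_{n\geq1}\Big(\sup_{k\geq1}\big|\tfrac{a_{k+n}}{a_k}\big|\Big)^{1/n},
\]
as claimed.

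The main obstacle is the compactness of $\sum_{n\geq1}F_n$, and within that, making the geometric-tail estimate on $\|F_n\|$ fully rigorous: the hypothesis is on $\limsup$, so one must split into the finitely many initial indices $j<N$ (where $|c_j|$ is merely bounded, and the head product $b_j\cdots b_{N-1}/(a_{j+1}\cdots a_N)$ is a fixed constant) and the tail $j\geq N$ (where every factor is genuinely $<r$); and one must separately check that the ``leftmost-column'' entry $|b_0^2b_1\cdots b_{n-1}/(a_0\cdots a_n)|$ also decays geometrically — here the extra $b_0/a_0$ is just a constant and $|b_1\cdots b_{n-1}/(a_1\cdots a_n)|=|1/a_n|\cdot|b_1\cdots b_{n-1}|$ needs rewriting as $\prod|b_k/a_{k+1}|$ (up to reindexing) to expose the geometric decay. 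Everything else — the unitary equivalence via $\{f_n\}$, the compactness of $D$, the stability of $\sigma_e$ under compact perturbation, and the telescoping in the weight products — is routine. A small care point is the well-definedness hypothesis that the kernel series has radius of convergence $1$, which is needed only to ensure $\mathcal{H}_{a,b}$ genuinely consists of analytic functions on $\mathbb{D}$ and plays no role in the operator-theoretic computation here.
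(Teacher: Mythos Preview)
Your proposal is correct and follows essentially the same route as the paper: use the decomposition $[B]=[B_w]+[D]+\sum_{n\geq1}[F_n]$ from Theorem~\ref{decomposition}, argue that $D$ and each $F_n$ are compact with $\sum_n\|F_n\|<\infty$ (so $K=D+\sum_n F_n$ is compact), and then obtain $\sigma_e(B)=\sigma_e(B_w)$ from the standard annulus formula together with the telescoping $\prod_{i=1}^n w_{k+i}=a_{k+n}/a_k$. One minor clean-up: the reason the subdiagonal entries of a fixed $F_n$ tend to $0$ is simply that $c_j\to 0$ while $\prod_{k=0}^{n-1}|b_{j+k}/a_{j+k+1}|$ is eventually bounded by $r^n$---your remark about square-summability is unnecessary here, but the conclusion and the overall argument match the paper's.
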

\begin{proof}
The proof relies on the matrix representation of $B_{w}$ with respect to the orthonormal basis $f_n(z)=(a_n+b_nz)z^n$ of $\ell^p_{a,b}$. Consider the similarity map $U:\ell^p_{a,b}\rightarrow \ell^p$ given by 
\[
U(\sum_{n=0}^{\infty} \lambda_n f_n)=\sum_{n=0}^{\infty}\lambda_ne_n,
\]
that is, $U(f_n)=e_n$ for all $n$, where $\{e_n\}_{n\geq 0}$ is the standard basis in $\ell^p$. Now, from the proof of Theorem \ref{decomposition} we recall that $B_{w}$ on $\ell^p_{a,b}$ is similar via $U$ to the sum (in the operator norm)
\[
B_{\alpha}+D+\sum_{m=1}^{\infty}F_m.
\]
Here, $B_{\alpha}$ in the weighted backward shift on $\ell^p$ with weights
\begin{equation}\label{weights}
\alpha_n=w_{n}\frac{a_{n}}{a_{n-1}}, \hspace{1cm} n\geq 1.
\end{equation}
Further, by the assumptions, the operators $D$ and $F_m$ are compact on $\ell^p$ since the entries in the matrix of $D$ and $F_m$ converge to $0$, $m\geq 1$. Hence, $K:=D+\sum_{m=1}^{\infty}F_m$ is a compact operator on $\ell^p$, and consequently, $B_w$ acting on $\ell^p_{a,b}$ is unitarily equivalent to $B_{\alpha}+K$. This proves $\text{(i)}$.

The invariance of the essential spectrum under compact perturbations along with $\text{(i)}$ yields that
\[
\sigma_e(B_w)=\sigma_e(B_{\alpha}+K)=\sigma_e(B_\alpha).
\]
Thus, it is enough to compute $\sigma_e(B_\alpha)$. We now recall the essential spectrum of a weighted backward shift on $\ell^2$ and refer to \cite{Bayart-Matheron} and \cite{Shields}: In general, for a weighted shift $B_{\alpha}$ corresponding to $\alpha=\{\alpha_n\}_{n=1}^{\infty}$ of non-zero weights, the essential spectrum $\sigma_e(B_{\alpha})$ is the annulus
\[
\sup_{n\geq 1}\left (\inf_{k\geq 1}\prod_{i=1}^{n}|\alpha_{k+i}|\right)^{1/n} ~\leq |z|\leq\inf_{n\geq 1} \left(\sup_{k\geq 1} \prod_{i=1}^n|\alpha_{k+i}|\right)^{1/n}.
\]
The same proof applies to the case of $B_w$ on $\ell^p$, $1<p<\infty$. In our setting, $B_{\alpha}$ is the weighted shift with weights as in \eqref{weights}. Since
\[
\prod_{i=1}^{n}{\alpha}_{k+i}=(w_{k+1}w_{k+2}\cdots w_{k+n})\frac{{a}_{k+n}}{a_k}
\]
for all $k,n\geq 1$, the result in $\text{(ii)}$ follows. The proof is complete.
\end{proof}

\section{Linear dynamics of $B_w$}

In this section, we characterize the hypercyclicity, mixing, and chaos of $B_{w}$ on $\ell^p_{a,b}$ and $c_{0,a,b}$. The following estimates on the norms of monomials will be used in the characterizations of the hypercyclicity properties of $B_{w}$.

\begin{proposition} \label{estimate}
Assume that the conditions in Theorem \ref{decomposition} hold. Then, $\ell^p_{a,b}$ contains all the polynomials densely, $1\leq p<\infty$. Also,
\begin{equation}\label{A}
\|z^n\|_{\ell^p_{a,b}}= \frac{1}{|a_n|} \left (1+\sum_{j=1}^{\infty}\left\lvert \frac{b_{n}b_{n+1}\cdots b_{n+j-1}}{a_{n+1}a_{n+2}\cdots a_{n+j}}\right\rvert^p \right)^{1/p}.
 \end{equation}
In addition, if $\limsup_n|\frac{b_n}{a_{n+1}}|<1$, then there is a constant $M_1>0$ such that 
\begin{equation}\label{B}
\| z^{n}\|_{\ell^{p}_{a,b}} \leq  \frac{M_1}{|a_{n}|},\hspace{.35cm} n\geq 0.
\end{equation}
\end{proposition}

\begin{proof}
Under the hypothesis, the operator $B_w$ is bounded on $\ell^p_{a,b}$. It follows that $w_0b_0=B_w(a_0+b_0z)$ and $w_na_nz^{n-1}+w_{n+1}b_nz^n=B_w(a_nz^n+b_nz^{n+1})$, $n\geq 1$, belong to $\ell^p_{a,b}$. Choosing different values of $n$, we find that the polynomials are contained in $\ell^p_{a,b}$. Indeed, $w_1a_1+w_2b_1z \in \ell^p_{a,b}$ and hence the monomial $z$ (and other powers $z^n$, similarly) belongs to $\ell^p_{a,b}$.

By the basis expansion in $\ell^{p}_{a,b}$ and the continuity of evaluation functionals, we can find some $\{\lambda_{j}\}_{j=0}^{\infty}\in \ell^p$ such that $$z^{n}= \sum_{j \geq 0} \lambda_{j}( a_{j}z^{j}+b_{j}z^{j+1}),$$ for all $z\in \mathbb{D}$. Equating the coefficients of like-powers, we have that $\lambda_{j}=0$ for $j=0,\ldots,n-1$, and
 \begin{equation}
 \lambda_{n}=\frac{1}{a_{n}},~~ \lambda_{n+1}=-\frac{b_{n}}{a_{n+1}}\lambda_{n}=-\frac{1}{a_{n}}\frac{b_{n}}{a_{n+1}},~~ \lambda_{n+2}= \frac{1}{a_{n}} \frac{b_{n}}{a_{n+1}} \frac{b_{n+1}}{a_{n+2}},
 \end{equation}
 and so on. Since $\lVert  z^{n}  \rVert^{p}_{\ell^{p}_{a,b}} = \sum_{j \geq 0} |\lambda_{j}|^{p},$
we have
\begin{equation}\label{O}
 \lVert  z^{n}  \rVert^{p}_{\ell^{p}_{a,b}} = \frac{1}{|a_{n}|^{p}} + \left | \frac{1}{a_{n}}\frac{b_{n}}{a_{n+1}}\right|^p+ \left| \frac{1}{a_{n}} \frac{b_{n}}{a_{n+1}} \frac{b_{n+1}}{a_{n+2}}\right|^p+\cdots,
 \end{equation}
 which gives the norm of $z^n$.
 
 On the other hand, the strong assumption $\limsup_n|\frac{b_n}{a_{n+1}}|<1$ implies that there exist $r<1$ and $N\in \mathbb{N}$ such that $|b_n/a_{n+1}|<r$ for all $n\geq N$. Thus, by the equation \eqref{O} we have $$  \lVert  z^{n}  \rVert^{p}_{\ell^{p}_{a,b}} \leq  \frac{1}{|a_{n}|^{p}} \big (\sum_{k\geq 0 } r^{pk} \big ),$$
 for every $n\geq N$. The required result in the second part of the proposition follows.
 \end{proof}
 The $c_{0,a,b}$ analogue of the above result is as follows, and its proof is omitted.
 
 \begin{proposition}
     Assume that the conditions in Theorem \ref{decomposition} hold. Then 
\begin{equation}\label{D}
 \lVert  z^{n}  \rVert_{c_{0,a,b}} = \max \left \{\frac{1}{|a_n|},\frac{1}{\lvert a_{n}\rvert}\sup_{j\geq 1}\left\lvert \frac{b_{n}b_{n+1}\cdots b_{n+j-1}}{a_{n+1}a_{n+2}\cdots a_{n+j}}\right\rvert \right \}.
 \end{equation}
In addition, if $\limsup_n|\frac{b_n}{a_{n+1}}|<1$, then there is a constant $M_2>0$ such that 
\begin{equation}\label{E}
\| z^{n}\|_{c_{0,a,b}} \leq  \frac{M_2}{|a_{n}|},\hspace{.35cm} n\geq 0.
\end{equation}
 \end{proposition}

We need one more result which also shows the connections between dynamics of the backward shift operator and derivatives of the underlying kernel. 
\begin{proposition}
    Let the weighted backward shift $B_{w}$ be a bounded operator on a Banach space $X$ of analytic functions on the unit disc $\mathbb{D}$, such that $X$ has some functions not vanishing at the origin.
    \begin{enumerate}
        \item[(i)] For some bounded subset $Y$ of $X$, if the orbit $\cup_{n\geq 1}B_{w}^n(Y)$ is dense, then
        \[
        \sup_{n\geq 0} \frac{|w_{1}w_{2}\cdots w_{n}|} {n!} \left\lVert\frac{d^{n}ev_{z}}{dz^{n}}|_{z=0} \right\rVert=\infty.
        \]
        In particular, this is true if $B_w$ is hypercyclic.
        \item[(ii)] If $B_{w}$ is topologically mixing, then
        \[
\lim_{n\rightarrow \infty} \frac{|w_{1}w_{2}\cdots w_{n}|} {n!}\left\lVert\frac{d^{n}ev_{z}}{dz^{n}}|_{z=0} \right\rVert=\infty.
        \]
    \end{enumerate}
\end{proposition}
\begin{proof}
   $\text{(i)}$ Suppose that
   \[
   \{(B_{w})^{n}f:n\geq1, f\in Y\}
   \]
is dense in $X$. It follows from the continuity of the (non-zero) evaluation functional 
   \[
   ev_0(g) = g(0),\hskip 1cm (g\in X).
   \]
   that $\{(B_w^nf)(0):n\geq 1, f\in Y\}$ is dense in $\mathbb{C}$. Now, from the expression
   \[
   (B_w^nf)(0)=w_1w_2\cdots w_n \frac {f^{(n)}(0)}{n!}, ~~n\geq 1,
   \]
   we get that
   \begin{equation}\label{deri}
   \sup_{n\geq 1, f\in Y}~\lvert w_{1}w_{2}\cdots w_{n} \frac{f^{(n)}(0)}{n!}\rvert=\infty.
   \end{equation}
   On the other hand, Proposition \ref{evaluation} shows that the co-ordinate functional defined by
   \[
   \big(\frac{d^n ev_z}{dz^n}|_{z=0}\big)(g)= \frac{g^{(n)}(0)}{n!}, \hskip 1cm g\in X,
   \]
   is bounded. Combining \eqref{deri} with
   \[
   \big |{f^{(n)}(0)}\big |\leq \left\|\frac{d^n ev_z}{dz^n}|_{z=0}\right\| \|f\|_{\ell^p_{a,b}}, ~~~n\geq 1,
   \]
   one gets $\text{(i)}$ as $Y$ is bounded.
  
     To obtain $\text{(ii)}$, we recall from Bonet \cite{Bonet2} that if $T$ is a mixing operator on a Banach space $X$, then 
     \[
     \lim_{n\rightarrow \infty}\|(T^*)^n(x^*)\|=\infty,
     \]
     for each non-zero bounded linear functional $x^*$ on $X$, i.e. 
     \[
     \lim_{n\rightarrow \infty} \|x^*\circ T^n\|=\infty.
     \]
     From this, we find $f\in X$ such that $\lim_{n\rightarrow \infty}|x^*(T^nf)|=\infty.$ In our case, $T=B_{w}$, and take $x^*=ev_0$, and we can see that $\text{(ii)}$ holds, as in $\text{(i)}$.
\end{proof}

\begin{proposition}
Let the weighted backward shift $B_{w}$ be a bounded operator on a (not necessarily tridiagonal) reproducing kernel space $\mathcal{H}(k)$ of analytic functions on the unit disc $\mathbb{D}$, such that $\mathcal{H}(k)$ has some functions not vanishing at the origin.
\begin{enumerate}
    \item[(i)] If $B_{w}$ is hypercyclic, then
    \[
\sup_{n\geq 0} \frac{|w_{1}w_{2}\cdots w_{n}|} {n!}\left(\frac{\partial^{2n}k}{\partial z^n\partial \overline{\zeta}^n} (0,0)\right)^{\frac{1}{2}}=\infty.
\]
\item[(ii)] If $B_{w}$ is topologically mixing, then
\[
\lim_{n\rightarrow \infty} \frac{|w_{1}w_{2}\cdots w_{n}|} {n!}\left(\frac{\partial^{2n}k}{\partial z^n\partial \overline{\zeta}^n} (0,0)\right)^{\frac{1}{2}}=\infty.
\]
\end{enumerate}
\end{proposition}

With the above two general propositions in hand, we can now state and prove the main dynamical results of $B_w$ of this section, using the hypercyclicity and chaoticity criteria.
 
 \begin{theorem}\label{Hypercyclic}
   Assume that the conditions of Theorem \ref{decomposition} hold.
   \begin{enumerate}
       \item[(i)] If 
       \[
       \sup_{n\geq 1} ~\frac{| w_{1}w_{2}\cdots w_{n}a_{n}|}{\left(1+\sum_{j=1}^{\infty}\left\lvert \frac{b_{n}b_{n+1}\cdots b_{n+j-1}}{a_{n+1}a_{n+2}\cdots a_{n+j}}\right\rvert^p \right)^{1/p}} =\infty,
       \]
       then $ B_{w}$ is hypercyclic on $\ell^{p}_{a,b}$, $1\leq p<\infty$.
       \item[(ii)] If 
       \[
       \sup_{n\geq 1}~ \frac{|w_1w_2\cdots w_na_n|}{\max \left\{1, \sup_{j\geq 1}\left\lvert \frac{b_{n}b_{n+1}\cdots b_{n+j-1}}{a_{n+1}a_{n+2}\cdots a_{n+j}}\right\rvert \right\}}
       =\infty,\]
       then $B_w$ is hypercyclic on $c_{0,a,b}$.
       
       \item[(iii)] If $B_{w}$ is hypercyclic on $\ell^p_{a,b}$ or $c_{0,a,b}$, then 
       \[
       \sup_{n\geq 1}~ |w_{1}w_{2}\cdots w_{n}|(|a_n|+|b_{n-1}|)=\infty.
       \]
       \item[(iv)] Now, assume the stronger condition $\limsup_n |b_n/a_{n+1}|<1$. Then, $B_{w}$ is hypercyclic on $\ell^p_{a,b}$ and $c_{0,a,b}$ if and only if $\sup_{n\geq 1} | w_{1}w_{2}\cdots w_{n} a_n|=\infty$. This is further equivalent to the existence of a bounded set $K$ such that the orbit
       \[
       \bigcup_{n=0}^{\infty} B_w^n(K)
       \]
       is dense.
   \end{enumerate}
    \end{theorem}
   
   \begin{proof}
   To get $\text{(i)}$, we recall and apply the Gethner-Shapiro criterion. Let $X_0$ be the space of all polynomials. Then, $X_0$ is dense in $\ell^{p}_{a,b}$ as it contains the normalized Schauder basis $\{(a_n+b_nz)z^n:n\geq 0\}$. Consider the forward weighted shift $S:X_0\rightarrow X_0$  given by
   \[
     S(z^{j})= \frac{z^{j+1}}{w_{j+1}},~\hspace{.0cm} n\geq 0. 
     \]
Trivially, 
$B_{w}Sf=f$, and $(B_{w})^nf\rightarrow 0$, as $~n\rightarrow \infty,
$
for all $f\in X_0$. It suffices to show that, there exists a strictly increasing sequence $\{m_k\}$ of natural numbers such that
\[
 S^{m_k}(z^j)\rightarrow 0,
\]
as $k\rightarrow \infty$, for every monomial $z^j$. Note that
\[
S^n(z^j)=\frac{1}{w_{j+1}w_{j+2}\cdots w_{j+n}} z^{j+n}.
\]

Combining the assumption in $\text{(i)}$ of our theorem, with the first estimate in Proposition \ref{estimate}, we get an increasing sequence $\{d_k\}$ such that $ S^{d_k}(z^n)\rightarrow 0,$ as $k\rightarrow \infty$. Now, Lemma 4.2 of \cite{Erdmann-Peris} completes the proof of $\text{(i)}$.

The proof of $\text{(ii)}$ runs verbatim.
 
 To obtain $\text{(iii)}$, note by the previous proposition that if $B_{w}$ is hypercyclic, then
\[
\sup_n \frac{|w_{1}w_{2}\cdots w_{n}|} {n!}\left\lVert\frac{d^{n}ev_{z}}{dz^{n}}|_{z=0} \right\rVert_{\ell^{p}_{a,b}}=\infty.
\]
Combining this with Proposition \ref{N-L}, it follows that
\[
\sup_n |w_{1}w_{2}\cdots w_{n}| (|a_n|^q+|b_{n-1}|^q)^{1/q}=\infty,
\]
where $q$ is the H\"{o}lder conjugate of $p$ and $1<p<\infty$. Since any two norms are equivalent on $\mathbb{C}^2$, we obtain that $\sup_n |w_{1}w_{2}\cdots w_{n}| (|a_n|+|b_n|)=\infty.$ This completes the proof of $\text{(ii)}$. (The case of $p=1$ is similar.)

To see the part (iv), we proceed as in (i) and (ii) for the sufficiency and necessity, respectively, and along the way, use the second part in Proposition \ref{N-L} and the condition $\limsup_n |b_n/a_{n+1}|<1$. Indeed, if $K$ is a bounded set in $\ell^p$, $1\leq p<\infty$, such that 
\[
\{B_w^nf:~f\in K, n\geq 1\}
\]
is dense, then 
\[
\sup \left \{\left |w_1w_2\cdots w_n \frac{f^{(n)}(0)}{n!}\right|: f\in K, n\geq 1\right\}=\infty.
\]
Since $K$ is bounded and 
\[
|f^{(n)}(0)|\leq \|\frac{d^{n}ev_{z}}{dz^{n}}|_{z=0}\|\leq C(|a_n|+|b_{n-1}),
\]
we get $\sup_n |w_{1}w_{2}\cdots w_{n}| (|a_n|+|b_{n-1}|)=\infty$. Using the boundedness of $\{b_n/a_{n+1}\}$, we see that the condition in $\text{(i)}$ is satisfied, and hence, $B_w$ is hypercyclic.
\end{proof}

Our next results are about the necessary and sufficient conditions for $B_{w}$ to become mixing.
\begin{theorem}
   The following hold for the shift $B_{w}$ on $\ell^{p}_{a,b}$, $1\leq p<\infty$, and $c_{0,a,b}$.
   \begin{enumerate}
        \item If
       \[
       \lim_{n\rightarrow \infty} ~\frac{| w_{1}w_{2}\cdots w_{n}a_{n}|} {\left(1+\sum_{j=1}^{\infty}\left\lvert \frac{b_{n}b_{n+1}\cdots b_{n+j-1}}{a_{n+1}a_{n+2}\cdots a_{n+j}}\right\rvert^p \right)^{1/p}}=\infty,
       \]
       then $ B_{w}$ is mixing on $\ell^{p}_{a,b}$, $1\leq p<\infty$.
       \item[(ii)] If 
       \[
       \lim_{n\rightarrow \infty}~ \frac{|w_1w_2\cdots w_na_n|}{\max \left\{1,~\sup_{j\geq 1}\left\lvert \frac{b_{n}b_{n+1}\cdots b_{n+j-1}}{a_{n+1}a_{n+2}\cdots a_{n+j}}\right\rvert \right\}}
       =\infty,\]
       then $B_w$ is mixing on $c_{0,a,b}$.
       
       \item[(iii)] If $B_{w}$ is mixing on $\ell^p_{a,b}$ or $c_{0,a,b}$, then 
       \[
       \lim_{n\rightarrow \infty}~ |w_{1}w_{2}\cdots w_{n}|(|a_n|+|b_{n-1}|)=\infty.
       \]
       \item[(iii)] Assuming the stronger condition $\limsup_n |b_n/a_{n+1}|<1$, the operator $B_{w}$ is mixing if and only if $\lim_{n\rightarrow \infty} |w_{1}w_{2}\cdots w_{n} a_n|=\infty$.
   \end{enumerate}
    \end{theorem}
    
    \begin{proof}
Proceed exactly as in the proof of the previous theorem by applying the Gethner-Shapiro criterion with $n_k=k$ for $k\geq 1$. 
        \end{proof}

Along the same lines, we obtain a characterization for $B_{w}$  to be chaotic using the chaoticity criterion. For an operator $T$ defined on $X$, a vector $x\in X$ is called a weakly almost periodic vector if there exists an integer $k\geq 1$ such that $T^{kn}(x)$ converges to $x$ weakly, as $n\rightarrow \infty$.

 \begin{theorem}\label{M-T}
 Consider the weighted shift $B_w$ acting on the space $\ell^p_{a,b}$, $1\leq p<\infty$.\\ \emph{(i)} Set
 \[
\lambda_{n,0}=1,\hspace{.6cm} \text{and}\hspace{.6cm} \lambda_{n,j}=(-1)^{j}\frac{b_{n}b_{n+1}\cdot\cdot \cdot b_{n+j-1}}{a_{n+1}a_{n+2}\cdot\cdot \cdot a_{n+j}}, \,\,\,\, (n\geq0, j\geq 1).
\]
If
 \[
 \sum_{n=1}^{\infty} \left|\frac{\lambda_{1,n-1}}{w_{1} a_1}+\cdots+\frac{\lambda_{n,0}}{w_{1}\cdots w_{n}a_n}\right|^p<\infty,
 \]
 then $B_w$ is chaotic on $\ell^p_{a,b}$.\\ \emph{(ii)} If $\sup_n \big|w_{n+1}\frac{a_{n+1}}{a_n}\big|<\infty$ and $\limsup_n \big|b_n/a_{n+1}\big|<1$, then the following are equivalent.
\begin{enumerate}
\item[(i)] $B_{w}$ is chaotic on $\ell^{p}_{a,b}$.
\item[(ii)] $B_{w}$ has a non-trivial periodic vector.
\item[(iii)] $B_w$ has a non-trivial almost periodic vector.
\item[(iv)] $B_w$ has a non-trivial weakly almost periodic vector.
\item[(v)] The series 
\[
\sum_{n=1}^{\infty} \frac{1}{|w_{1}\cdots w_{n}a_{n} \rvert^{p}}
\]
is convergent.
\end{enumerate}
 \end{theorem}

  \begin{proof}
  The arguments in $\text{(i)}\Rightarrow \text{(ii)}\Rightarrow \text{(iii)}\Rightarrow \text{(iv)}$ are trivial.
  
     Suppose that the condition in $\text{(v)}$ holds. We apply the chaoticity criterion to show that $B_w$ is chaotic on $\ell^{p}_{a,b}.$

     \noindent Let $X_0$ be the space of all polynomials. Define $S: X_0 \rightarrow X_0$ given by $S(z^{n})=\frac{1}{w_{n+1}} z^{n+1}$, $n\geq 0$. Clearly, $B_{w}S=I$ on $X_0$, and the series $\sum_{n=0}^{\infty} (B_{w})^{n}(f)$ converges unconditionally for each $f \in X_0.$ It remains to show that the series  $\sum_{n=0}^{\infty} S^{n}(f)$ converges unconditionally, for each $f \in X_0.$ We prove that
     \[
     \sum_{n=1}^{\infty} \frac{1}{w_{1}\cdots w_{n}} z^n
     \]
     is unconditionally convergent in $\ell^{p}_{a,b}$. Recalling the bases expansion from \eqref{monomial-expansion}, for a fixed $n \geq 0,$ we have
\begin{equation}\label{monomial1}
z^{n}=\frac{1}{a_{n}}\sum_{j=0}^{\infty}\lambda_{n,j}f_{n+j}.
\end{equation}
Also,
\begin{align*}
\sum_{n=1}^{\infty} \frac{1}{w_{1}\cdots w_{n}} z^n=&\sum_{n=1}^{\infty} \frac{1}{w_{1}\cdots w_{n}} \left(\frac{1}{a_{n}}\sum_{j=0}^{\infty}\lambda_{n,j}f_{n+j}\right)\\=&\sum_{n=1}^{\infty} \left(\frac{\lambda_{1,n-1}}{w_{1} a_1}+\cdots+\frac{\lambda_{n,0}}{w_{1}\cdots w_{n}a_n}\right)f_n.
\end{align*}

As $\limsup_n |b_n/a_{n+1}|<1$, one gets $N\in\mathbb{N}$ and $r<1$ such that $|b_n/a_{n+1}|<r$ for all $n\geq N$. Hence,
\[
\left|\frac{\lambda_{1,n-1}}{w_{1} a_1}+\cdots+\frac{\lambda_{n,0}}{w_{1}\cdots w_{n}a_n}\right|\leq \frac{r^{n-1}}{|w_{1} a_1|}+\cdots+\frac{1}{|w_{1}\cdots w_{n}a_n|},
\]
for all $n\geq N$. The right hand side of the above inequality is the $n$-th term of an $\ell^1$-convolution of the $\ell^p$ element 
\[
\left\{\frac{1}{w_{1}\cdots w_{n}a_n}\right\}_{n=1}^{\infty},
\]
and hence it is absolutely $p$-summable. Consequently, the series $\sum_{n\geq N} (w_{1}\cdots w_{n})^{-1}z^n$ is convergent. The unconditional convergence occurs because $\{f_n\}$ is an unconditional basis. Hence, $B_{w}$ satisfies the chaoticity criterion, and \text{(i)} follows.

$\text{(iv)}\Rightarrow\text{(v)}$: Let $f(z)=\sum_{n=0}^{\infty}\lambda_{n}f_{n}(z)$ be a non-zero weakly almost periodic vector for $B_{w}$ on $\ell^{p}_{a,b}$, where $f_n(z)=a_nz^n+b_nz^{n+1}$, $n\geq 0$, forms a normalized Schauder basis for $\ell^{p}_{a,b}$. We can certainly take $\lambda_0\neq 0$. Then, for some $m\in \mathbb{N},$ we have
$B_{w}^{km}(f) \to f$ weakly, as $k\to \infty$. It follows, using the continuity of co-ordinate functionals, that
\[
w_{km}w_{km-1}\cdots w_{1}(\lambda_{km-1}b_{km-1}+\lambda_{km}a_{km}) \to \lambda_{0}a_{0} , \hspace{.5cm} \text{as}~~~~k\to \infty.
\]
Since
\[
\frac{1}{\lvert w_{km}w_{km-1}\cdots w_{1} a_{km} \rvert^{p}}=\left\lvert\frac{\lambda_{km-1}\frac{b_{km-1}}{a_{km}}+\lambda_{km}}{w_{km}w_{km-1}\cdots w_{1}(\lambda_{km-1}b_{km-1}+\lambda_{km}a_{km})}\right\rvert^{p}.
\]
Now $\sum_{k=1}^{\infty}\left\lvert \lambda_{km-1}\frac{b_{km-1}}{a_{km}}+\lambda_{km} \right\rvert^{p}<\infty,$ as $\{\lambda_{k}\}\in \ell^{p}$ and $\limsup_k \big|b_k/a_{k+1}\big|<1,$ and also 
\[
\frac{1}{\left\lvert w_{km}w_{km-1}\cdots w_{1}(\lambda_{km-1}b_{km-1}+\lambda_{km}a_{km})\right\rvert^{p}} \to \frac{1}{\left\lvert\lambda_{0}a_{0}  \right\rvert^{p}}, 
\]
as $k\rightarrow \infty$, where $\lambda_{0}a_{0} \neq 0.$ Thus, we get
\[
\sum_{k=1}^{\infty} \frac{1}{\lvert w_{km}w_{km-1}\cdots w_{1} a_{km} \rvert^{p}} < \infty.
\]
Again for $j=1,\cdots,m-1,$ the weak convergence $B_{w}^{km+j}(f) \to B_w^jf$, where $k\to \infty$ implies that
\[
w_{km+j}w_{km+j-1}\cdots w_{1}(\lambda_{km+j-1}b_{km+j-1}+\lambda_{km+j}a_{km+j}) \to w_{j}w_{j-1}\cdots w_{1}(\lambda_{j}a_{j}+\lambda_{j-1}b_{j-1} ),
\] 
as $k\to\infty.$ Once again for  $\{\lambda_{k}\}\in \ell^{p}$ and $\limsup_k \big|b_k/a_{k+1}\big|<1,$ and also $w_{j}\cdots w_{1}(\lambda_{j}a_{j}+\lambda_{j-1}b_{j-1} )\neq 0$, we get
\[
\sum_{k=0}^{\infty}\frac{1}{\left\lvert w_{km+j}w_{km+j-1}\cdots w_{1}a_{km+j} \right\rvert^{p}}< \infty.
\]
 Consequently, the series in $\text{(iii)}$ is convergent.
  \end{proof}

 We now remark on the existence of hypercyclic subspaces for $B_w$ in $\ell^p_{a,b}$. Recall that if the set $HC(T)$ of all hypercyclic vectors of an operator $T$ on a Banach space $X$ contains a closed infinite dimensional subspace (excluding the zero vector), then we say that $T$ has a hypercyclic subspace. It is well known that the essential spectrum of an operator $T$ on a complex Banach space completely characterizes the existence of hypercyclic subspaces, thanks to an important result of Gonz\'{a}lez, Le\'{o}n-Saavedra and Montes-Rodr\'{i}guez \cite{Gonzalez}: if $T$ is a bounded operator satisfying the hypercyclicity criterion in a complex Banach space, then $T$ has a hypercyclic subspace if and only if 
\begin{equation}\label{hc-subspace}
\sigma_e(T)\cap \overline{\mathbb{D}}\neq \phi.
\end{equation}
For details on the study of hypercyclic subspaces and related topics for various classes of operators including the weighted backward shifts, we refer to \cite{Bayart-Matheron}, \cite{Leon}, \cite{Menet}, and \cite{Montes}.

\begin{corollary}
Assume that
\[
\sup_n|w_{n+1}\frac{a_{n+1}}{a_n}|<\infty,~ \limsup_n|b_n/a_{n+1}|<1, ~~and ~~\lim_n |w_{n+1}\frac{b_n}{a_n}-w_{n}\frac {b_{n-1}}{a_{n-1}}|=0.
\]
Then, $B_{w}$ has hypercyclic subspaces in $\ell^p_{a,b}$, $1<p<\infty$, if and only if
\[
\sup_n|w_{1}w_{2}\cdots w_{n} a_n|=\infty \hspace{.7cm} \text{and} \hspace{.7cm} \sup_{n\geq 1}\left (\inf_{k\geq 1}\left|(w_{k+1}w_{k+2}\cdots w_{k+n})\frac{{a}_{k+n}}{a_k}\right|\right)^{1/n}\leq 1.
\]
\end{corollary}
\begin{proof}
The essential spectrum of $B_w$ is an annulus, cf. Theorem \ref{prop}, which intersects the disc $\overline{\mathbb{D}}$ if and only if the inner radius of the annulus is less or equal to $1$. The result follows at once in view of \eqref{hc-subspace} and Theorem \ref{prop}. 
\end{proof}

We illustrate our results with a Hilbert function space $\mathcal{H}$. See \cite{Adams-McGuire} for a similar study about the the unweighted forward shift on a tridiagonal space $\ell^2_{a,b}$. The space $\mathcal{H}$, defined below, is closely related to the Bergman space $A^2$ (the closure of the space of all complex polynomials in the Lebesgue space $L^2$ of the open unit disc in $\mathbb{C}$) such that the operator $B$ on $\mathcal{H}$ is a Hilbert-Schmidt perturbation of the backward shift on $A^2$. Recall that, if $f\in A^2$ has a power series $f(z)=\sum_{n\geq 0} \hat{f}(n)z^n$, then 
\[
\|f\|^2:=\frac{1}{\pi}\iint_{\mathbb{D}}|f(z)|^2dA(z)=\sum_{n\geq 0} \frac{|\hat{f}(n)|^2}{n+1},
\]
where $dA(z)$ is the area measure in $\mathbb{D}$; see, for instance, \cite{Paulsen}. Moreover, $\sqrt{n+1}\hspace{0.04cm} z^n$, $n\geq 0$, forms an orthonormal basis in $A^2$.

Let $\mathcal{H}$ be the Hilbert space, of all analytic functions on the disc, having an orthonormal basis consisting of
\[
f_n(z)=\sqrt{n+1}z^n+z^{n+1}, ~~ n\geq 0.
\]
Then, the evaluation functionals are bounded, and $\mathcal{H}$ is densely and continuously included in the Bergman space $A^2$. Indeed, if $f\in \mathcal{H}$, then there exists some $\{\lambda_n\}\in\ell^2$ such that $f(z)=\sum_{n=0}^{\infty} \lambda_n f_n(z)$ for all $z\in \mathbb{D}$. Rearranging the sum as a power series, we get 
\[
f(z)=\lambda_0a_0+\sum_{n=1}^{\infty} \Big(\lambda_{n-1}+\lambda_n \sqrt{n+1}\Big)z^n,
\]
and so, for some constant $M>0$ we have
\[
\|f\|_{A^2}^2=|\lambda_0|^2+\sum_{n=1}^{\infty} \frac{\big|\lambda_{n-1}+\lambda_n \sqrt{n+1}\big|^2}{n+1}\leq M^2\sum_{n\geq 0}|\lambda_n|^2<\infty,
\]
since $\{\lambda_n\}\in\ell^2$. As $\|f\|_{A^2}\leq M \|f\|_{\mathcal{H}}$, for all $f\in\mathcal{H}$, the kernel space $\mathcal{H}$ is continuously included in $A^2$. Also, by the main results in the previous sections, the shift $B$ is a bounded operator on $\mathcal{H}$, and it is a non-chaotic mixing operator admitting hypercyclic subspaces, as is the case of $B$ on $A^2$. By Theorem \ref{prop}, it also follows that $B$ on $\mathcal{H}$ is unitarily equivalent to a compact perturbation of the Bergman backward shift $B_w$ on $\ell^2$ with weights $w_n=\sqrt{\frac{n+1}{n}}, ~n\geq 1$. We now have a better understanding of $B$ on $\mathcal{H}$, as follows. Recall that an operator $T$ on a Hilbert space is a Hilbert-Schmidt operator if $\|T\|_{HS}^2:=\sum_n\|T(e_n)\|^2<\infty$, for some (and hence for all)  orthonormal basis $\{e_n\}$ of $\mathcal{H}$.

\begin{proposition}
Let $\mathcal{H}$ be the Hilbert space defined as above. Then the inclusion from $\mathcal{H}$ into $A^2$ is continuous and has dense range. Moreover, the shift $B$ on $\mathcal{H}$ is mixing, but not chaotic, and unitarily equivalent to a Hilbert-Schmidt perturbation of the Bergman backward shift.
\end{proposition}
\begin{proof}
    Recalling from the proof of Theorem \ref{decomposition}, we have that $B$ on $\mathcal{H}$ is (unitarily) 
    \[
    B_w+D+\sum_{n\geq 1}F_n, 
    \]
    where $B_w$ is the Bergman backward shift, $D$ is the diagonal operator having the diagonal $(\frac{b_0}{a_0},c_1,c_2,\cdots)$, and $F_n$ is a power of a weighted forward shift. It is now enough to show that $D$ and each $F_n$ are Hilbert-Schmidt, and $\sum_{n\geq 1}F_n$ is convergent in $\|.\|_{HS}$. Since 
    \[
    c_n=\frac{1}{\sqrt{n+1}}-\frac{1}{\sqrt{n}},~n\geq 1,
    \]
     forms a square summable sequence, it follows that $D$ is Hilbert-Schmidt, and we get the same for each $F_n$. Since 
     \[
     \|F_n\|_{HS}\leq \frac {1}{n^2}
     \]
     for large $n$, we immediately get that $\sum_{n\geq 1} F_n$ is absolutely convergent with respect to the Hilbert-Schmidt norm $\|.\|_{HS}$, which completes the proof.
\end{proof}

\subsection{Eigenvectors and dynamics of $\varphi(B_w)$}

For an analytic function $\varphi(z)$, defined on a neighbourhood of the spectrum $\sigma(B_w)$, let $\varphi(B_w)$ denote the operator on $\ell^p_{a,b}$ and $c_{0,a,b}$,  given by the usual holomorphic functional calculus. In this subsection, we identify the eigenvectors for $\varphi(B_w)$ and apply the eigenvalue criteria to deduce the dynamics of $\varphi(B_w)$.

 Let
\[
h_{\mu}(z):=1+\sum_{n=1}^{\infty}\frac{1}{w_{1}\cdots w_{n}}\mu^{n}z^{n},~~z\in\mathbb{D},
\]
where $\mu\in \mathbb{C}$. Then, $h_{\mu}$ is an eigenvector of $B_{w}$, corresponding to the eigenvalue $\mu$, provided $h_{\mu}\in \ell^p_{a,b}$ or $c_{0,a,b}$. Now we will find the conditions for which the eigenvector $h_{\mu}$ belongs to $\ell^{p}_{a,b}.$ or $c_{0,a,b}$.

\begin{proposition}\label{sp}
    The function $h_{\mu}\in \ell^{p}_{a,b}$, $1\leq p<\infty$, if and only if
    \[
    \sum_{n=1}^{\infty} \left\lvert \frac{\lambda_{1,n-1}\mu}{w_{1} a_1}+\cdots+\frac{\lambda_{n,0}\mu^n}{w_{1}\cdots w_{n}a_n}  \right\rvert^{p}<\infty,
    \]
    where $\{\lambda_{n,j}\}$ is as in Theorem \ref{M-T}. Also, $h_{\mu}\in c_{0,a,b}$ if and only if 
    \[
     \left|\frac{\lambda_{1,n-1}\mu}{w_{1} a_1}+\cdots+\frac{\lambda_{n,0}\mu^n}{w_{1}\cdots w_{n}a_n}\right| \to 0, 
    \]
    as $n \to \infty.$
\end{proposition}
\begin{proof}
    By \eqref{monomial-expansion} we have
\begin{align*}
\sum_{n=1}^{\infty} \frac{\mu^n}{w_{1}\cdots w_{n}} z^n=&\sum_{n=1}^{\infty} \frac{\mu^n}{w_{1}\cdots w_{n}} \left(\frac{1}{a_{n}}\sum_{j=0}^{\infty}\lambda_{n,j}f_{n+j}\right)\\=&\sum_{n=1}^{\infty} \left(\frac{\lambda_{1,n-1}\mu}{w_{1} a_1}+\cdots+\frac{\lambda_{n,0}\mu^n}{w_{1}\cdots w_{n}a_n}\right)f_n.
\end{align*}
Here, $\lambda_{n,0}=0$, and 
\[
\lambda_{n,j}=(-1)^{j}\frac{b_{n}b_{n+1}\cdot\cdot \cdot b_{n+j-1}}{a_{n+1}a_{n+2}\cdot\cdot \cdot a_{n+j}}, \,\,\,\, (n\geq 1, j\geq 1).
\]
Here, $f_{n}(z)=(a_{n}+b_{n}z)z^{n}, n\geq 0.$ The given condition in the proposition now ensures that $h_{\mu}\in\ell^p_{a,b}$. Similarly, the result for $c_{0,a,b}$ follows.
\end{proof}

In the following, $\sigma_p(B_w)$ denote the point spectrum of $B_w$.

\begin{theorem}
Suppose that $B_w$ is bounded on $\ell^p_{a,b}$ and $c_{0,a,b}$, $1\leq p<\infty$. If $\varphi(z)$ is a non-constant function, analytic on a neighborhood of the spectrum of $B_w$ and $\varphi(U)\cap \mathbb{T}\neq \phi$ for some open ball $U$ around $0$ and $U\subseteq \sigma_p(B_w)$, then $\varphi(B_w)$ is mixing and chaotic.
\end{theorem}
\begin{proof}
    Note that $B_w(h_{\mu})=\mu h_{\mu}$ and hence 
    \[
    \varphi(B_w)h_{\mu}=\varphi(\mu) h_{\mu},
    \]
    for all $\mu \in U$. In view of Theorem \ref{aa}, it suffices to show the subspaces $X_0$, $X_1$ and $X_2$ are dense in $\ell^p_{a,b}$ and $c_{0,a,b}$, $1\leq p<\infty$, where
    \[
    X_{0}=\textit{span}~\left\{h_{\mu}: \lambda\in U, \lvert \varphi(\mu)\rvert<1\right\},
    \]
    \[
    X_{1}=\textit{span}~\left\{h_{\mu}: 
    \lambda\in U, \lvert \varphi(\mu)\rvert>1\right\},
    \]
    and
    \[
     X_{2}=\textit{span}~\left\{h_{\mu}: \lambda\in U, \varphi(\mu)^n=1, n\in \mathbb{N}\right\}.
    \]
    We prove an auxiliary result that, for a set $\Gamma$ having accumulation points in $\varphi(U)$,
    \[
    span~\{h_{\mu}: \varphi(\mu) \in \Gamma\}
    \]
    is dense in $\ell^p_{a,b}$, $1\leq p<\infty$, from which it follows that $X_0$, $X_1$ and $X_2$ are dense. Note that the set $\varphi^{-1}(\Gamma):=\{\mu\in U: \varphi(\mu)\in \Gamma\}$ has accumulation points in $U$. So, if $x^{*} \in (\ell^{p}_{a,b})^{*}$, and 
    \[
    x^*(h_{\mu})=x^{*}\left( \sum_{n=0}^{\infty}\mu^{n}z^{n}\right)=0,
    \]
    for all $\mu\in \varphi^{-1}(\Gamma)$, then 
    \[
    x^{*}(z^n)=0,~~\forall n\geq 0.
    \]
    As the polynomials are dense in $\ell^p_{a,b}$, $1\leq p
    <\infty$, we conclude that $x^*=0$, the zero functional.

    The above auxiliary result along with the eigenvalue criteria yields that $\varphi(B_w)$ is mixing and chaotic.
\end{proof}

\begin{example}
    Take $a_0=b_0=1$, and $a_{n}=n^2$ and $b_{n}=1$ for all $n\geq 1$. Consider the (unweighted) shift $B$ on and $\ell^2_{a,b}$. Then, $B$ is a bounded operator by Theorem \ref{decomposition}. Additionally, it fulfils the requirements of hypercyclicity, mixing, and chaos in $\ell^2_{a,b}$. Now, for $|\mu|\leq 1$, we have
    \begin{align*}
    x_n:&= \left\lvert \frac{\mu^n}{a_{n}}-\frac{\mu^{n-1}}{a_{n-1}}\frac{b_{n-1}}{a_{n}}+\cdots +\mu {(-1)^{n-1}} \frac{1}{a_{1}}\frac{b_{1}b_{2}\cdots b_{n-1}}{a_{1}a_{2}\cdots a_{n}}  \right\rvert\\&=\left\lvert \frac{\mu^n}{n^2}-\frac{\mu^{n-1}}{(n-1)^{2}}\frac{1}{n^{2}}+\cdots +\mu{(-1)^{n-1}}\frac{1}{n!^2}\right|  \leq \frac {1}{n}.
    \end{align*}
    In view of the above proposition, $h_{\mu}$ is an eigenvector for $B_w$, corresponding to the eigenvalue $\mu$.    
\end{example}

\begin{remark}
Godefroy and Shapiro \cite{Godefroy-Shapiro} initially studied the dynamics of the differentiation operator $D(f)=f^{\prime}$ acting on the space $\mathcal{H}(\mathbb{C})$ of all entire functions. Indeed, they proved that any non-trivial operator $T$ such that $TD=DT$ is chaotic. Note that $D$ on $\mathcal{H}(\mathbb{C})$ is a weighted backward shift $B_w$ with respect to the basis $\{z^n:n\geq 0\}$, where $w_n=n$, $n\geq 1$.  Motivated by these work, we can understand the dynamics of $D$ on $\ell^p_{a,b}$ and $c_{0,a,b}$, which follow directly from the previous sections by taking $w_n=n$, $n\geq 1$. In particular, the boundedness, hypercyclicity and chaos of $D$ on $\ell^p_{a,b}$ and $c_{0,a,b}$ follow from the results in the sections $3$ and $4$. For some results on the dynamics of differentiation operators, we refer to \cite{Bonet1}, \cite{Chan-Shapiro}, \cite{A-F} and \cite{F}.
\end{remark}

\begin{remark}
It is well known that if an operator $T$ satisfies the chaoticity criterion, then it is frequently hypercyclic, see \cite{Bonilla-Erdmann1}. (The notion of frequent hypercyclicity was introduced by Bayart and Grivaux \cite{Bayart-Grivaux}). Hence, any of the statements in Theorem $4.7$ implies that $B_w$ is frequently hypercyclic on $\ell^p_{a,b}$. It would be interesting to know if it is also necessary for $B_w$ to be frequently hypercyclic. For weighted backward shifts on $\ell^p$, $1\leq p<\infty$, it is well known that the chaos and frequent hypercyclicity are equivalent, cf. \cite{Bayart-Ruzsa}, but not on $c_0$.
\end{remark}

{ \bf Author Contributions.} All authors contributed equally for the preparation of the manuscript.
\vskip .3cm

{\bf Funding.} The first author acknowledges a research fellowship of CSIR,\\ File No.: 09/1059(0037)/2020-EMR-I, and the second named author acknowledges SERB, DST, File. No.: SRG/2021/002418, for a start-up grant.
\vskip .3cm
{\bf Declarations.} The authors have no conflicts of interests.

\bibliographystyle{amsplain}

\end{document}